\newtheorem{Def}{Definition}[section]
\newtheorem{Th}[Def]{Theorem}
\newtheorem{Ex}[Def]{Example}
\newtheorem{Prop}[Def]{Proposition}
\newtheorem{Cor}[Def]{Corollary}
\newtheorem{Prob}[Def]{Problem}
\DeclareMathOperator{\Sf}{S_{fin}}
\DeclareMathOperator{\Gf}{G_{fin}}
\DeclareMathOperator{\Uf}{U_{fin}}
\DeclareMathOperator{\Guf}{G_{ufin}}
\DeclareMathOperator{\SSS1}{SS_1^\ast}
\DeclareMathOperator{\SSf}{S_{fin}^\ast}
\DeclareMathOperator{\SGf}{G_{fin}^\ast}
\DeclareMathOperator{\SUf}{U_{fin}^\ast}
\DeclareMathOperator{\SGuf}{G_{ufin}^\ast}
\DeclareMathOperator{\SSSf}{SS_{fin}^\ast}
\DeclareMathOperator{\SSGf}{SG_{fin}^\ast}
\DeclareMathOperator{\SSSC}{SS_{comp}^\ast}
\DeclareMathOperator{\SSGC}{SG_{comp}^\ast}
\DeclareMathOperator{\SK}{SS_{\mathcal{K}}^\ast}
\DeclareMathOperator{\Int}{Int}
\DeclareMathOperator{\Cl}{Cl}
\begin{document}
\title[On certain star-$K$ variant of a selection principle]{On certain star-$K$ variant of a selection principle}

\author[ D. Chandra, N. Alam ]{ Debraj Chandra$^*$, Nur Alam$^*$
}
\newcommand{\acr}{\newline\indent}
\address{\llap{*\,}Department of Mathematics, University of Gour
Banga, Malda-732103, West Bengal, India}
\email{debrajchandra1986@gmail.com, nurrejwana@gmail.com}

\thanks{ The second author
is thankful to University Grants Commission (UGC), New
Delhi-110002, India for granting UGC-NET Junior Research
Fellowship (1173/(CSIR-UGC NET JUNE 2017)) during the tenure of
which this work was done.}

\subjclass{Primary: 54D20, 91A44; Secondary: 54B05, 54C10,
54D99}

\maketitle

\begin{abstract}
This article deals with certain star variants of the Scheepers
property. We introduce and study the star-$K$-Scheepers property and corresponding game. The relationships between the game corresponding to the star-$K$-Scheepers property and other variants of the Scheepers games are investigated. Furthermore, the relation among the winning strategies of the players in such games are represented in an implication diagram. We present certain observations in terms of Alster covers and the Alexandroff duplicates. In addition, we examine few preservation like properties in this context. Some open problems are also posed.
\end{abstract}

\noindent{\bf\keywordsname{}:} {Scheepers property,
star-Scheepers, star-$K$-Scheepers, strongly star-Scheepers.}

\section{Introduction}
The study of selection principles and corresponding games in set-theoretic topology has a renowned history. After the systematic study of selection principles and corresponding games by Scheepers in \cite{coc1} (see also \cite{coc2}), it has acquired considerable importance in becoming one of the most active research fields. It has been observed that various topological notions can be defined or characterized in terms of selection principles and associated games. Since then the idea of selection principles has been generalized in many ways. In \cite{LjSM}, Ko\v{c}inac introduced a generalization of selection principles by defining star selection principles and he mainly investigated star versions of the Menger property \cite{coc1}. Readers interested in recent developments of star selection principles may consult the papers \cite{LjSM,BCS,BGM,BM,dcna21,dcna22-1,dcna22-2,SVM,survey,survey1} and references therein.

Note that one of the most important selection principle is
$\Uf(\mathcal{O},\Omega)$, nowadays called the Scheepers
property (\cite{coc1,coc2}, see also \cite{FPM} where different terminology is used). In general, Scheepers property is stronger than the Menger property and weaker than the Hurewicz property. The star variations and weaker forms of the Scheepers property have been recently investigated in \cite{dcna22-2,dcna22-3}. In this article we introduce another star variant of the Scheepers property, namely star-$K$-Scheepers property.

The paper is organised as follows. In Section 3, we mainly introduce the star-$K$-Scheepers property. The interrelationships between the properties considered here are outlined into an implication diagram (Figure~\ref{dig1}). Certain situations are discussed when all the star versions of the Scheepers property considered here are identical. We present few observations using Alster covers. It is observed that the extent of a star-$K$-Scheepers space can be made as large as possible. Later in this section, we present game theoretic observations corresponding to the selection principles considered here. In the context of paracompact spaces we show that the game corresponding to the star-$K$-Scheepers property is equivalent to some other related games. We also show that the role of paracompactness is essential in this setting. We sketch another implication diagram (Figure~\ref{dig2}) to describe the relationship between the winning strategies in the games considered here. In Section 4, certain observations on the Alexandroff duplicates (\cite{AD,Engelking}) are presented. Further we investigate preservation kind of properties under certain topological operations. In the final section, some open problems are posed.

\section{Preliminaries}
Throughout the paper $(X,\tau)$ stands for a topological space.
For undefined notions and terminologies see \cite{Engelking}. Let $\mathcal{A}$ and $\mathcal{B}$ be collections of families of subsets of an infinite space $X$. Following \cite{coc1,coc2}, we define

\noindent $\Sf(\mathcal{A},\mathcal{B})$: For each sequence
$(\mathcal{U}_n)_{n\in\mathbb{N}}$ of elements of $\mathcal{A}$ there exists a sequence $(\mathcal{V}_n)_{n\in\mathbb{N}}$ such that for each $n\in\mathbb{N}$, $\mathcal{V}_n$ is a finite subset of $\mathcal{U}_n$ and
$\cup_{n\in\mathbb{N}}\mathcal{V}_n\in\mathcal{B}$.\\

\noindent $\Uf(\mathcal{A},\mathcal{B})$: For each sequence
$(\mathcal{U}_n)_{n\in\mathbb{N}}$ of elements of $\mathcal{A}$ there exists a sequence $(\mathcal{V}_n)_{n\in\mathbb{N}}$ such that for each $n\in\mathbb{N}$, $\mathcal{V}_n$ is a finite subset of $\mathcal{U}_n$ and
$\{\cup\mathcal{V}_n : n\in\mathbb{N}\}\in\mathcal{B}$ or there
is some $n\in\mathbb{N}$ such that $\cup\mathcal{V}_n=X$.

For a subset $A$ of a space $X$ and a collection $\mathcal{P}$
of subsets of $X$, $St(A,\mathcal{P})$ denotes the star of $A$
with respect to $\mathcal{P}$, that is the set
$\cup\{B\in\mathcal{P} : A\cap B\neq\emptyset\}$. For $A=\{x\}$,
$x\in X$, we write $St(x,\mathcal{P})$ instead of
$St(\{x\},\mathcal{P})$ \cite{Engelking}.

In \cite{LjSM}, Ko\v{c}inac introduced star selection principles in the following way.

\noindent $\SSf(\mathcal{A},\mathcal{B})$: For each sequence
$(\mathcal{U}_n)_{n\in\mathbb{N}}$ of elements of $\mathcal{A}$ there exists a sequence $(\mathcal{V}_n)_{n\in\mathbb{N}}$ such that for each $n\in\mathbb{N}$, $\mathcal{V}_n$ is a finite subset of $\mathcal{U}_n$ and
$\cup_{n\in\mathbb{N}}\{St(V,\mathcal{U}_n) : V\in\mathcal{V}_n\}\in\mathcal{B}$.\\

\noindent $\SUf(\mathcal{A},\mathcal{B})$: For each sequence
$(\mathcal{U}_n)_{n\in\mathbb{N}}$ of elements of $\mathcal{A}$ there exists a sequence $(\mathcal{V}_n)_{n\in\mathbb{N}}$ such that for each $n\in\mathbb{N}$, $\mathcal{V}_n$ is a finite subset of $\mathcal{U}_n$ and
$\{St(\cup\mathcal{V}_n,\mathcal{U}_n) :
n\in\mathbb{N}\}\in\mathcal{B}$ or there is some $n\in\mathbb{N}$ such that
$St(\cup\mathcal{V}_n,\mathcal{U}_n)=X$.

Let $\mathcal{K}$ be a collection of subsets of $X$. Then we define

\noindent $\SK(\mathcal{A},\mathcal{B})$: For each sequence
$(\mathcal{U}_n)_{n\in\mathbb{N}}$ of elements of $\mathcal{A}$ there exists a sequence $(K_n)_{n\in\mathbb{N}}$ of elements of $\mathcal{K}$ such that
$\{St(K_n,\mathcal{U}_n) : n\in\mathbb{N}\}\in\mathcal{B}$.
If $\mathcal{K}$ is the collection of one-point (resp. finite, compact)
subsets of $X$, then we use $\SSS1(\mathcal{A},\mathcal{B})$ (resp. $\SSSf(\mathcal{A},\mathcal{B})$, $\SSSC(\mathcal{A},\mathcal{B})$) instead of $\SK(\mathcal{A},\mathcal{B})$.

Let $\mathcal O$ denote the collection of all open covers of
$X$. An open cover $\mathcal{U}$ of $X$ is said to be a $\gamma$-cover if each element of $X$ does not belong to at most finitely many members of $\mathcal{U}$ \cite{survey} (see also \cite{coc2,coc1}). We use the symbol $\Gamma$ to denote the collection of all $\gamma$-covers of $X$. An open cover $\mathcal{U}$ of $X$ is said to be an $\omega$-cover if for each finite subset $F$ of $X$ there is a set $U\in\mathcal{U}$ such that $F\subseteq U$ \cite{coc1,coc2}. We use the symbol $\Omega$ to denote the collection of all
$\omega$-covers of $X$. Note that $\Gamma\subseteq\Omega\subseteq\mathcal{O}$. An open cover $\mathcal{U}$ of $X$ is said to be weakly groupable \cite{cocVIII} if it can be expressed as a countable union of finite, pairwise disjoint subfamilies $\mathcal{U}_n$,
$n\in\mathbb{N}$, such that for each finite set $F\subseteq X$ we have $F\subseteq\cup\mathcal{U}_n$ for some $n\in\mathbb{N}$. The collection of all weakly groupable open covers of $X$ is denoted by $\mathcal{O}^{wgp}$.

A space $X$ is said to have the Menger (resp. Scheepers, Hurewicz) property if it satisfies the selection hypothesis $\Sf(\mathcal{O},\mathcal{O})$ (resp. $\Uf(\mathcal{O},\Omega)$, $\Uf(\mathcal{O},\Gamma)$) \cite{coc1,coc2} (see also \cite{FPM}).

A space $X$ is said to have the (1) star-Menger property, (2)
star-$K$-Menger property, (3) strongly star-Menger property, (4)
star-Hurewicz property, (5) star-$K$-Hurewicz property and (6)
strongly star-Hurewicz property if it satisfies the selection
hypothesis (1) $\SSf(\mathcal{O},\mathcal{O})$, (2)
$\SSSC(\mathcal{O},\mathcal{O})$, (3)
$\SSSf(\mathcal{O},\mathcal{O})$, (4)
$\SUf(\mathcal{O},\Gamma)$, (5) $\SSSC(\mathcal{O},\Gamma)$ and
(6) $\SSSf(\mathcal{O},\Gamma)$ respectively \cite{LjSM,sHR}
(see also \cite{LjSM-II,rsM,rssM,sKH,sKM}).

A space $X$ is said to be starcompact (resp. star-Lindel\"{o}f)
if for every open cover $\mathcal{U}$ of $X$ there exists a
finite (resp. countable) $\mathcal{V}\subseteq\mathcal{U}$ such
that $St(\cup\mathcal{V},\mathcal{U})=X$. $X$ is said to be
strongly starcompact (resp. $K$-starcompact, strongly
star-Lindel\"{o}f) if for every open cover $\mathcal{U}$ of $X$
there exists a finite (resp. compact, countable) set $K\subseteq
X$ such that $St(K,\mathcal{U})=X$ \cite{LjSM,sCP,sKM}.
A subset $A$ of a space $X$ is said to be regular-closed in $X$
if $\Cl(\Int A)=A$.
For a Tychonoff space $X$, $\beta X$ denotes the \v{C}ech-Stone
compactification of $X$.
For a space $X$, $e(X)=\sup\{|Y| : Y\;\text{is a closed and
discrete subspace of}\; X\}$ is said to be the extent of $X$.

A natural pre-order $\leq^*$ on the Baire space
$\mathbb{N}^\mathbb{N}$ is defined by $f\leq^*g$ if and only if
$f(n)\leq g(n)$ for all but finitely many $n$. A subset $D$ of $\mathbb{N}^\mathbb{N}$ is said to be dominating
if for each $g\in\mathbb{N}^\mathbb{N}$ there exists a $f\in D$
such that $g\leq^* f$. A subset $A$ of $\mathbb{N}^\mathbb{N}$ is said to be bounded if there is a $g\in\mathbb{N}^\mathbb{N}$ such that $f\leq^*g$ for all $f\in A$. The minimum cardinality of a dominating subset of $\mathbb{N}^\mathbb{N}$ is denoted by $\mathfrak{d}$, the minimum cardinality of an unbounded subset of $\mathbb{N}^\mathbb{N}$ is denoted by $\mathfrak{b}$ and $\mathfrak{c}$ denotes the cardinality of the set of the real numbers. A infinite set $r\subseteq\mathbb{R}$ reaps a family $\mathcal{A}$ of countable subsets of $\mathbb{N}$ if for each set $a\in\mathcal{A}$ both sets $a\cap r$ and $a\setminus r$ are infinite. The minimum cardinality of a family $\mathcal{A}$ of countable subsets of $\mathbb{N}$ that no set reaps is denoted by $\mathfrak{r}$. For any cardinal $\kappa$, $\kappa^+$ denotes the smallest cardinal greater than $\kappa$.

\section{The star-$K$-Scheepers property and related games}
\subsection{The star-$K$-Scheepers property}
In \cite{dcna22-2}, we introduced the following two star
variants of the Scheepers property.
\begin{Def}
\label{D1}
A space $X$ is said to have the star-Scheepers property if it
satisfies the selection hypothesis $\SUf(\mathcal{O},\Omega)$.
\end{Def}

\begin{Def}
\label{D2}
A space $X$ is said to have the strongly star-Scheepers property
if it satisfies the selection hypothesis
$\SSSf(\mathcal{O},\Omega)$.
\end{Def}

We now introduce the main definition of this paper.
\begin{Def}
\label{D3}
A space $X$ is said to have the star-$K$-Scheepers property if
it satisfies the selection hypothesis
$\SSSC(\mathcal{O},\Omega)$.
\end{Def}
We call a space $X$ star-$K$-Scheepers if $X$ has the
star-$K$-Scheepers property.
\begin{figure}[h]
\begin{adjustbox}{max width=\textwidth,max
height=\textheight,keepaspectratio,center}
\begin{tikzcd}[column sep=4ex,row sep=6ex,arrows={crossing
over}]
\SUf(\mathcal{O},\Gamma)\arrow[rr] &&
\SUf(\mathcal{O},\Omega)\arrow[rr] &&
\SSf(\mathcal{O},\mathcal{O})
\\
\SSSC(\mathcal{O},\Gamma)\arrow[rr]\arrow[u]&&
\SSSC(\mathcal{O},\Omega) \arrow[rr]\arrow[u]&&
\SSSC(\mathcal{O},\mathcal{O})\arrow[u]
\\
\SSSf(\mathcal{O},\Gamma) \arrow[rr]\arrow[u]&&
\SSSf(\mathcal{O},\Omega) \arrow[rr]\arrow[u]&&
\SSSf(\mathcal{O},\mathcal{O}) \arrow[u]
\\
 \Uf(\mathcal{O},\Gamma) \arrow[rr]\arrow[u]&&
\Uf(\mathcal{O},\Omega)\arrow[rr]\arrow[u]&&
\Sf(\mathcal{O},\mathcal{O})\arrow[u]
\end{tikzcd}
\end{adjustbox}
\caption{Star variations of the Hurewicz, Scheepers and Menger
properties}
\label{dig1}
\end{figure}

Next we discuss some examples to make distinction between the
properties considered here.

Let $aD$ be the one point compactification of the discrete space
$D$ with $|D|=\mathfrak c$. Also consider the subspace
$X=(aD\times [0,\mathfrak{c}^+))\cup
(D\times\{\mathfrak{c}^+\})$ of the product space $aD\times
[0,\mathfrak{c}^+]$. By \cite[Example 2.2]{sKM}, $X$ is a
Tychonoff $K$-starcompact space which is not strongly
star-Menger. This shows the existence of a star-$K$-Scheepers
space which is not strongly star-Scheepers.

\begin{Ex}
\label{E3}
\emph{There is a Hausdorff star-Scheepers space which is not
star-$K$-Scheepers.}\\
Let $P=\{x_\alpha : \alpha<\mathfrak{c}\}$ and $Q=\{y_n :
n\in\mathbb{N}\}$. Define $Y=\{(x_\alpha, y_n) :
\alpha<\mathfrak{c}, n\in\mathbb{N}\}$ and $X=Y\cup P\cup\{p\}$,
where $p\notin Y\cup P$. We now define a topology on $X$ as
follows. Every point of $Y$ is isolated, a basic neighbourhood
of a point $x_\alpha\in P$ for each $\alpha<\mathfrak{c}$ is of
the form $$U_{x_\alpha}(n)=\{x_\alpha\}\cup\{(x_\alpha, y_m) :
m>n\}$$ for $n\in\mathbb{N}$ and a basic neighbourhood of $p$ is
of the form $$U_p(A)=\{p\}\cup\{(x_\alpha, y_n) : x_\alpha\in
P\setminus A, n\in\mathbb{N}\}$$ for a countable subset $A$ of
$P$. In \cite[Example 2.7]{sKH}, it is proved that $X$ is a Hausdorff star-Hurewicz space, and hence $X$ is star-Scheepers. In \cite[Example 2.3]{RSKM}, it is proved that $X$ is not star-$K$-Menger, hence not star-$K$-Scheepers.
\end{Ex}

Next result follows from \cite[Theorem 2.11]{LjSM} with necessary
modifications.
\begin{Prop}
Every paracompact star-$K$-Scheepers space is Scheepers.
\end{Prop}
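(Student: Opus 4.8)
The plan is to deduce the Scheepers property $\Uf(\mathcal{O},\Omega)$ from the star-$K$-Scheepers property $\SSSC(\mathcal{O},\Omega)$ by using paracompactness to replace stars of compact sets by finite subfamilies of the original covers. First I would start with an arbitrary sequence $(\mathcal{U}_n)$ of open covers of $X$. Since $X$ is paracompact, each $\mathcal{U}_n$ admits a locally finite open refinement $\mathcal{V}_n$, which is again an open cover of $X$. Fix for every $V\in\mathcal{V}_n$ a member $r_n(V)\in\mathcal{U}_n$ with $V\subseteq r_n(V)$; this refinement map will let me pull finite pieces of $\mathcal{V}_n$ back into $\mathcal{U}_n$.

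Next I would apply the star-$K$-Scheepers property to the sequence $(\mathcal{V}_n)$ to obtain a sequence $(K_n)$ of compact subsets of $X$ such that $\{St(K_n,\mathcal{V}_n):n\in\mathbb{N}\}$ is an $\omega$-cover of $X$. The crucial step is the observation that, because $\mathcal{V}_n$ is locally finite and $K_n$ is compact, $K_n$ meets only finitely many members of $\mathcal{V}_n$: each point of $K_n$ has a neighbourhood meeting finitely many members of $\mathcal{V}_n$, and finitely many such neighbourhoods cover $K_n$. Consequently $St(K_n,\mathcal{V}_n)$ is the union of a finite subfamily $\{V_{n,1},\dots,V_{n,k_n}\}$ of $\mathcal{V}_n$.

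Then I would set $\mathcal{W}_n=\{r_n(V_{n,1}),\dots,r_n(V_{n,k_n})\}$, a finite subset of $\mathcal{U}_n$, and note $St(K_n,\mathcal{V}_n)\subseteq\cup\mathcal{W}_n$. Since enlarging each set in an $\omega$-cover keeps it an $\omega$-cover, the family $\{\cup\mathcal{W}_n:n\in\mathbb{N}\}$ is again an $\omega$-cover of $X$: for any finite $F\subseteq X$ there is an $n$ with $F\subseteq St(K_n,\mathcal{V}_n)\subseteq\cup\mathcal{W}_n$. This exhibits the witnesses required by $\Uf(\mathcal{O},\Omega)$, so $X$ is Scheepers.

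I expect the main obstacle to be the compact-versus-locally-finite interplay in the middle step, namely establishing rigorously that a compact set meets only finitely many members of a locally finite open family, and hence that its star is a finite union. Everything else is bookkeeping: the passage to locally finite refinements is exactly what paracompactness provides, the refinement map is routine, and the preservation of the $\omega$-cover property under enlargement of its members is immediate. One should also keep in mind the degenerate clause of $\Uf$ (some $\cup\mathcal{V}_n=X$), but this case is harmless and is in fact subsumed once $\{\cup\mathcal{W}_n:n\in\mathbb{N}\}$ is shown to be an $\omega$-cover.
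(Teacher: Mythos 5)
Your proof is correct and follows exactly the argument the paper intends: the paper gives no explicit proof but defers to Ko\v{c}inac's analogous result for the star-Menger case, whose standard proof is precisely your scheme of passing to locally finite refinements, noting that a compact set meets only finitely many members of a locally finite family, and pulling the resulting finite star back into the original covers. The handling of the $\omega$-cover conclusion and the degenerate clause of $\Uf(\mathcal{O},\Omega)$ is also sound.
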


Similar to the other classical selective properties, the
Scheepers property is also equivalent to any of its star
variants in paracompact Hausdorff spaces.
\begin{Prop}
\label{T1}
For a paracompact Hausdorff space $X$ the following assertions
are equivalent.
\begin{enumerate}[label={\upshape(\arabic*)},
leftmargin=*]
\item $X$ is Scheepers.

\item $X$ is strongly star-Scheepers.

\item $X$ is star-$K$-Scheepers.

\item $X$ is star-Scheepers.
\end{enumerate}
\end{Prop}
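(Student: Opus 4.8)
The plan is to close a cycle of implications. The three forward implications (1)$\Rightarrow$(2)$\Rightarrow$(3)$\Rightarrow$(4) are already recorded, as vertical arrows, in Figure~\ref{dig1} and hold for arbitrary spaces, so no hypothesis is needed there: plain Scheepers $\Uf(\mathcal{O},\Omega)$ implies strongly star-Scheepers $\SSSf(\mathcal{O},\Omega)$, which implies star-$K$-Scheepers $\SSSC(\mathcal{O},\Omega)$, which implies star-Scheepers $\SUf(\mathcal{O},\Omega)$. Thus the entire content of the proposition is the single closing implication (4)$\Rightarrow$(1): every paracompact Hausdorff star-Scheepers space is Scheepers. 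This is where paracompactness enters, and it is the only step that is not purely formal.

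For (4)$\Rightarrow$(1) I would exploit full normality. Since $X$ is paracompact and Hausdorff, by Stone's theorem every open cover of $X$ admits an open star-refinement. So, given a sequence $(\mathcal{U}_n)$ of open covers, I first choose for each $n$ an open star-refinement $\mathcal{V}_n$ of $\mathcal{U}_n$, meaning that for every $V\in\mathcal{V}_n$ there is some $U\in\mathcal{U}_n$ with $St(V,\mathcal{V}_n)\subseteq U$. Applying the star-Scheepers property $\SUf(\mathcal{O},\Omega)$ to the sequence $(\mathcal{V}_n)$ produces finite subfamilies $\mathcal{W}_n\subseteq\mathcal{V}_n$ such that $\{St(\cup\mathcal{W}_n,\mathcal{V}_n):n\in\mathbb{N}\}$ is an $\omega$-cover of $X$, or $St(\cup\mathcal{W}_n,\mathcal{V}_n)=X$ for some $n$.

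The key computation is the identity $St(\cup\mathcal{W}_n,\mathcal{V}_n)=\bigcup_{W\in\mathcal{W}_n}St(W,\mathcal{V}_n)$, which lets me pull each star back into $\mathcal{U}_n$: for every $W\in\mathcal{W}_n$ pick $U_W\in\mathcal{U}_n$ with $St(W,\mathcal{V}_n)\subseteq U_W$, and set $\mathcal{V}_n'=\{U_W:W\in\mathcal{W}_n\}$, a finite subset of $\mathcal{U}_n$. Then $St(\cup\mathcal{W}_n,\mathcal{V}_n)\subseteq\cup\mathcal{V}_n'$ for each $n$, so any finite set captured by the $\omega$-cover of stars is already contained in the corresponding $\cup\mathcal{V}_n'$; hence $\{\cup\mathcal{V}_n':n\in\mathbb{N}\}$ is an $\omega$-cover of $X$, and if some star equals $X$ then the corresponding $\cup\mathcal{V}_n'=X$. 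This witnesses $\Uf(\mathcal{O},\Omega)$, i.e.\ $X$ is Scheepers.

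The main obstacle is locating the star-refinement: one must recall that paracompactness of a Hausdorff space is exactly full normality, so that the stars appearing in the star-selection hypothesis can be absorbed into genuine members of the original covers. Once the star-refinement is in hand, the remainder is the routine bookkeeping above; the passage from the Menger version in \cite{LjSM} to the present $\omega$-cover version requires only replacing ``cover'' by ``$\omega$-cover'' throughout, since the containment $St(\cup\mathcal{W}_n,\mathcal{V}_n)\subseteq\cup\mathcal{V}_n'$ transports the $\omega$-cover property upward verbatim.
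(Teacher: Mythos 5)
Your proposal is correct and follows essentially the route the paper intends: the paper omits the proof, remarking only that it follows from Ko\v{c}inac's argument for the Menger case in \cite{LjSM} ``with necessary modifications,'' and that argument is exactly your Stone star-refinement computation, with the chain (1)$\Rightarrow$(2)$\Rightarrow$(3)$\Rightarrow$(4) read off from Figure~\ref{dig1}. Your observation that the containment $St(\cup\mathcal{W}_n,\mathcal{V}_n)\subseteq\cup\mathcal{V}_n'$ transports the $\omega$-cover condition verbatim is precisely the ``necessary modification.''
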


By \cite[Theorem 8.10]{SFPH}, there is a set  of reals $X$ which
is Scheepers but not Hurewicz. With the help of Proposition~\ref{T1} and \cite[Proposition 4.1]{sHR}, we can conclude that there exists a star-$K$-Scheepers space which is not star-$K$-Hurewicz. Also by \cite[Theorem 2.1]{FPM}, assuming $\mathfrak{d}\leq\mathfrak{r}$, there is a set of reals $X$ which is Menger but not Scheepers. Thus Proposition~\ref{T1} and \cite[Theorem 2.8]{LjSM} together imply the existence of a star-$K$-Menger space which is not star-$K$-Scheepers.

\begin{Prob}
\label{P6}
Is there a ZFC example of a star-$K$-Menger space which is not star-$K$-Scheepers?
\end{Prob}

\begin{Th}
\label{T17}
If each finite power of $X$ is star-$K$-Menger, then $X$ is
star-$K$-Scheepers.
\end{Th}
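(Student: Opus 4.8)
The plan is to exploit the standard correspondence between open covers of the finite powers $X^k$ and $\omega$-covers of $X$. Fix a sequence $(\mathcal{U}_n)$ of open covers of $X$; I must produce compact sets $K_n\subseteq X$ such that $\{St(K_n,\mathcal{U}_n):n\in\mathbb{N}\}$ is an $\omega$-cover of $X$. For a cover $\mathcal{U}$ of $X$ and $k\in\mathbb{N}$, set $\mathcal{U}^{(k)}=\{U_1\times\cdots\times U_k:U_1,\dots,U_k\in\mathcal{U}\}$, which is an open cover of $X^k$.

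The crucial (and, I expect, most delicate) step is a lemma transferring stars upstairs into stars downstairs. Writing $\pi_i\colon X^k\to X$ for the $i$-th coordinate projection, I claim: if $L\subseteq X^k$ is compact and $K=\bigcup_{i=1}^k\pi_i(L)$, then $K$ is a compact subset of $X$ (a finite union of continuous images of a compact set), and every point $(x_1,\dots,x_k)\in St(L,\mathcal{U}^{(k)})$ satisfies $x_i\in St(K,\mathcal{U})$ for each $i$. Indeed, a basic box $U_1\times\cdots\times U_k\in\mathcal{U}^{(k)}$ witnessing the star must meet $L$ at some point $(y_1,\dots,y_k)$; then $y_i\in\pi_i(L)\subseteq K$, and since $x_i,y_i\in U_i\in\mathcal{U}$ we get $x_i\in St(K,\mathcal{U})$.

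Next I would partition $\mathbb{N}$ into infinitely many pairwise disjoint infinite sets $(N_k)_{k\in\mathbb{N}}$. For each fixed $k$, the sequence $(\mathcal{U}_n^{(k)})_{n\in N_k}$ consists of open covers of $X^k$, so applying the star-$K$-Menger property of $X^k$ yields compact sets $L_n\subseteq X^k$ (for $n\in N_k$) with $\{St(L_n,\mathcal{U}_n^{(k)}):n\in N_k\}$ a cover of $X^k$. Setting $K_n=\bigcup_{i=1}^k\pi_i(L_n)$ for $n\in N_k$ defines a compact subset $K_n$ of $X$ for every $n\in\mathbb{N}$.

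Finally I would verify the $\omega$-cover property. Given a nonempty finite set $F=\{z_1,\dots,z_m\}\subseteq X$, consider the point $(z_1,\dots,z_m)\in X^m$. Since $\{St(L_n,\mathcal{U}_n^{(m)}):n\in N_m\}$ covers $X^m$, some $n\in N_m$ satisfies $(z_1,\dots,z_m)\in St(L_n,\mathcal{U}_n^{(m)})$, and the transfer lemma gives $z_i\in St(K_n,\mathcal{U}_n)$ for all $i$, that is $F\subseteq St(K_n,\mathcal{U}_n)$. Hence $\{St(K_n,\mathcal{U}_n):n\in\mathbb{N}\}$ is an $\omega$-cover of $X$, so $X$ is star-$K$-Scheepers. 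The only genuine obstacle is the star-transfer lemma of the second paragraph; once it is established, the partition-and-assemble argument is routine.
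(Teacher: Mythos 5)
Your proposal is correct and follows essentially the same route as the paper: partition $\mathbb{N}$ into infinite sets $N_k$, apply the star-$K$-Menger property of $X^k$ to the box covers indexed by $N_k$, and project the resulting compact sets back to $X$. The only difference is that you spell out the star-transfer lemma which the paper leaves implicit behind the remark that $K_m\subseteq C_m^k$; this is a welcome clarification, not a deviation.
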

\begin{proof}
We pick a sequence $(\mathcal{U}_n)_{n\in\mathbb{N}}$ of open covers of $X$ to show that $X$ is star-$K$-Scheepers. Let $\{N_k : k\in\mathbb{N}\}$ be a partition of $\mathbb{N}$ into infinite subsets. For each $k\in\mathbb{N}$ and each $m\in N_k$ choose $$\mathcal{W}_m=\{U_1\times U_2\times\dotsb\times U_k : U_1,U_2,\dotsc,U_k\in\mathcal{U}_m\}.$$ Observe that $(\mathcal{W}_m:{m\in N_k})$ is a sequence of open covers of $X^k$. Since $X^k$ is star-$K$-Menger, there exists a sequence $(K_m:{m\in N_k})$ of compact subsets of $X^k$ such that $\{St(K_m,\mathcal{W}_m) : m\in N_k\}$ is an open cover of $X^k$. For each $1\leq i\leq k$ let $p_i:X^k\to X$ be the $i$th projection mapping. Clearly $p_i(K_m)$ is a compact subset of $X$ for each $1\leq i\leq k$ and each $m\in N_k$. Next for each $m\in N_k$ we consider the subset $$C_m=\cup_{1\leq i\leq k}p_i(K_m)$$ of $X$. Clearly $C_m$ is compact and $K_m\subseteq C_m^k$ for each $m\in N_k$. Thus the sequence $(C_n)_{n\in\mathbb{N}}$ witnesses for $(\mathcal{U}_n)_{n\in\mathbb{N}}$ that $X$ is star-$K$-Scheepers.
\end{proof}

For any two collections $\mathcal{A}$ and $\mathcal{B}$ of subsets of a space $X$, we denote by $\mathcal{A}\wedge\mathcal{B}$ the set $\{A\cap B : A\in\mathcal{A}, B\in\mathcal{B}\}$.
\begin{Th}
\label{T18}
For a space $X$ the following assertions are equivalent.
\begin{enumerate}[label={\upshape(\arabic*)}, leftmargin=*]
  \item $X$ is star-$K$-Scheepers.
  \item $X$ satisfies $\SSSC(\mathcal{O},\mathcal{O}^{wgp})$.
\end{enumerate}
\end{Th}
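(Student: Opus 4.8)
The plan is to prove the two implications separately, with essentially all of the content lying in $(2)\Rightarrow(1)$. For $(1)\Rightarrow(2)$ I would argue that every countable $\omega$-cover is weakly groupable: given a sequence $(\mathcal{U}_n)$ of open covers, the star-$K$-Scheepers property produces compact sets $(K_n)$ so that $\mathcal{W}=\{St(K_n,\mathcal{U}_n):n\in\mathbb{N}\}$ is an $\omega$-cover. Since $\mathcal{W}$ is countable, I would partition it into singleton subfamilies; these are finite and pairwise disjoint, and for each finite $F\subseteq X$ the $\omega$-cover property yields a single member (hence a single block) containing $F$. Thus $\mathcal{W}\in\mathcal{O}^{wgp}$ and the very same sequence $(K_n)$ witnesses $\SSSC(\mathcal{O},\mathcal{O}^{wgp})$.

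For $(2)\Rightarrow(1)$ the idea is to feed property (2) not the covers $(\mathcal{U}_n)$ themselves but their running wedges $\mathcal{V}_n=\mathcal{U}_1\wedge\mathcal{U}_2\wedge\cdots\wedge\mathcal{U}_n$. Each $\mathcal{V}_n$ is an open cover refining $\mathcal{U}_i$ for every $i\le n$, and the sequence $(\mathcal{V}_n)$ is refining, i.e. $\mathcal{V}_n$ refines $\mathcal{V}_m$ whenever $n\ge m$. Applying $\SSSC(\mathcal{O},\mathcal{O}^{wgp})$ to $(\mathcal{V}_n)$ gives compact sets $(K_n)$ such that $\mathcal{W}=\{St(K_n,\mathcal{V}_n):n\in\mathbb{N}\}$ is weakly groupable; write $\mathcal{W}=\bigcup_j\mathcal{H}_j$ with each $\mathcal{H}_j$ finite and pairwise disjoint and such that every finite $F$ lies in some $\bigcup\mathcal{H}_j$. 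For each $j$ I would pick a finite index set $M_j$ with $\mathcal{H}_j=\{St(K_n,\mathcal{V}_n):n\in M_j\}$ and set $a_j=\min M_j$ and $C_j=\bigcup_{n\in M_j}K_n$ (a finite union of compacts, hence compact).

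The key computation exploits the two elementary facts that refinement shrinks stars and that stars distribute over unions of the centre with a fixed cover. Since every $n\in M_j$ satisfies $n\ge a_j$, the cover $\mathcal{V}_n$ refines $\mathcal{V}_{a_j}$, so $St(K_n,\mathcal{V}_n)\subseteq St(K_n,\mathcal{V}_{a_j})$ and therefore $\bigcup\mathcal{H}_j\subseteq\bigcup_{n\in M_j}St(K_n,\mathcal{V}_{a_j})=St(C_j,\mathcal{V}_{a_j})$. As $\mathcal{V}_{a_j}$ refines $\mathcal{U}_{a_j}$, one further gets $St(C_j,\mathcal{V}_{a_j})\subseteq St(C_j,\mathcal{U}_{a_j})$. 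Because the blocks $M_j$ are disjoint their minima $a_j$ are distinct, so I may define a single compact set per original cover by $K'_{a_j}=C_j$ and $K'_n=\emptyset$ otherwise. For each finite $F\subseteq X$ weak groupability supplies a $j$ with $F\subseteq\bigcup\mathcal{H}_j\subseteq St(C_j,\mathcal{U}_{a_j})=St(K'_{a_j},\mathcal{U}_{a_j})$, which shows that $\{St(K'_n,\mathcal{U}_n):n\in\mathbb{N}\}$ is an $\omega$-cover and hence that $X$ is star-$K$-Scheepers.

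I expect the main obstacle to be the bookkeeping that turns each block union $\bigcup\mathcal{H}_j$---a priori a union of stars taken with respect to several different covers $\mathcal{V}_n$---into a single star $St(C_j,\mathcal{U}_{a_j})$ of one compact set against one of the original covers. This is exactly what the wedge construction is designed to overcome: passing to the running intersections makes the feeder sequence refining, so that the coarsest cover occurring in a block dominates the rest. A secondary technical point is the passage from the abstract weakly groupable partition of the set $\mathcal{W}$ to an index set $M_j$, which needs a little care when distinct indices produce the same star; this is handled by choosing one representative index for each member of $\mathcal{H}_j$.
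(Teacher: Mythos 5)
Your proof is correct and follows essentially the same route as the paper's: wedge the covers so the feeder sequence is refining, apply (2), group the compact sets according to the blocks of the weakly groupable cover, and use the fact that refinement shrinks stars to pass back to the original covers. The only differences are cosmetic — you keep the abstract blocks $\mathcal{H}_j$ with representative index sets rather than re-indexing them as consecutive intervals $[n_k,n_{k+1})$, and you spell out the easy direction $(1)\Rightarrow(2)$ (every countable $\omega$-cover is weakly groupable via singleton blocks), which the paper leaves implicit.
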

\begin{proof}
Since each countable $\omega$-cover is weakly groupable, $(1)$ implies $(2)$ is trivial, so that we have to prove only $(2)\Rightarrow(1)$. We choose a sequence $(\mathcal{U}_n)_{n\in\mathbb{N}}$ of open covers of $X$ to prove that $X$ is star-$K$-Scheepers. For each $n\in\mathbb{N}$, $\mathcal{W}_n=\wedge_{i\leq n}\mathcal{U}_i$ is an open cover of $X$ which refines $\mathcal{U}_i$ for each $i\leq n$. Apply $(2)$ to the sequence $(\mathcal{W}_n)_{n\in\mathbb{N}}$ to obtain a sequence $(K_n)_{n\in\mathbb{N}}$ of compact subsets of $X$ such that $$\{St(K_n,\mathcal{W}_n) : n\in\mathbb{N}\}\in\mathcal{O}^{wgp}.$$  This gives us a sequence $n_1<n_2<\cdots<n_k<\cdots$ of positive integers such that for every finite set $F\subseteq X$ we have $$F\subseteq\cup_{n_k\leq i< n_{k+1}}St(K_i,\mathcal{W}_i)$$ for some $k\in\mathbb{N}$. Next we define a sequence $(K_n^\prime)_{n\in\mathbb{N}}$ of compact subsets of $X$ as follows.
\[
K_n^\prime=
\begin{cases}
 \cup_{i<n_1}K_i & \mbox{for } n<n_1  \\
  \cup_{n_k\leq i<n_{k+1}}K_i & \mbox{for } n_k\leq n<n_{k+1}.
\end{cases}
\]

Observe that $(K_n^\prime)_{n\in\mathbb{N}}$ is the required sequence for $(\mathcal{U}_n)_{n\in\mathbb{N}}$ that $X$ is star-$K$-Scheepers.
\end{proof}

We recall the following classes of covers from \cite{Alster}.
\begin{enumerate}[leftmargin=*]
\item[$\mathcal{G}$:] The family of all covers $\mathcal{U}$ of the space $X$ for which each element of $\mathcal{U}$ is a $G_\delta$ set.

\item[$\mathcal{G}_K$:] The family of all Alster covers of $X$. A cover $\mathcal{U}\in\mathcal{G}$ is said to be an Alster cover if $X$ is not in $\mathcal{U}$ and for each compact set $C\subseteq X$ there is a $U\in\mathcal{U}$ such that $C\subseteq U$.

\item[$\mathcal{G}_\Gamma$:] The family of all covers $\mathcal{U}\in\mathcal{G}$ which are infinite and each infinite subset of $\mathcal{U}$ is a cover of $X$.
\end{enumerate}

From Theorem~\ref{T5} to Theorem~\ref{T6} we assume that all considered covers are closed under finite unions.
\begin{Th}
\label{T5}
If a space $X$ has the property $\SSS1(\mathcal{G}_K,\mathcal{G})$, then $X$ is strongly star-Scheepers.
\end{Th}
\begin{proof}
Consider a sequence $(\mathcal{U}_n)_{n\in\mathbb{N}}$ of open covers of $X$. Let $\{N_k : k\in\mathbb{N}\}$ be a partition of $\mathbb{N}$ into infinite sets. Fix $k\in\mathbb{N}$. For each $n\in N_k$ let $\mathcal{W}_n=\{U^k : U\in\mathcal{U}_n\}$. Of course $(\mathcal{W}_n : n\in N_k)$ is a sequence of open covers of $X^k$. Define $\mathcal{U}=\{\cap_{n\in N_k}W_n : W_n\in\mathcal{W}_n\}$. Observe that $\mathcal{U}\in\mathcal{G}_K$ for $X^k$. Without loss of generality we suppose that $\mathcal{U}=\mathcal{H}_1\times\mathcal{H}_2\times\cdots\times\mathcal{H}_k$, where for each $1\leq i\leq k$ $\mathcal{H}_i\in\mathcal{G}_K$ for $X$. Since $X$ satisfies $\SSS1(\mathcal{G}_K,\mathcal{G})$, for each $1\leq i\leq k$ we obtain a countable set $C_i\subseteq X$ such that $\{St(x,\mathcal{H}_i) : x\in C_i\}\in\mathcal{G}_K$ for $X$ and hence there is a countable set $C=C_1\times C_2\times\cdots\times C_k\subseteq X^k$ such that $St(C,\mathcal{U})$ covers $X^k$. Enumerate $C$ as $\{(x_1^{(n)},x_2^{(n)},\dotsc,x_k^{(n)}) : n\in N_k\}$. For each $n\in N_k$ we choose $F_n=\{x_i^{(n)} : 1\leq i\leq k\}$. The sequence $(F_n)_{n\in\mathbb{N}}$ now witnesses for $(\mathcal{U}_n)_{n\in\mathbb{N}}$ that $X$ is strongly star-Scheepers.
\end{proof}

\begin{Cor}
\label{C6}
If a space $X$ has the property $\SSS1(\mathcal{G}_K,\mathcal{G})$, then $X$ is star-$K$-Scheepers.
\end{Cor}

\begin{Th}
\label{T6}
If $X$ satisfies $\SSS1(\mathcal{G}_K,\mathcal{G}_\Gamma)$ and $Y$ is star-$K$-Scheepers, then $X\times Y$ is also star-$K$-Scheepers.
\end{Th}
\begin{proof}
Let $(\mathcal{U}_n)_{n\in\mathbb{N}}$ be a sequence of open covers of $X\times Y$. Then there are two sequences $(\mathcal{A}_n)_{n\in\mathbb{N}}$ and $(\mathcal{B}_n)_{n\in\mathbb{N}}$ of open covers of $X$ and $Y$ respectively such that if $A\in\mathcal{A}_n$ (resp. $B\in\mathcal{B}_n$), there exist a $B\in\mathcal{B}_n$ (resp. a $A\in\mathcal{A}_n$) and a $U\in\mathcal{U}_n$ such that $A\times B\subseteq U$, and if $U\in\mathcal{U}_n$, then there exist a $A\in\mathcal{A}_n$ and a $B\in\mathcal{B}_n$ such that $A\times B\subseteq U$.

Let $K$ be a compact subset of $X$. For each $y\in Y$ and for each $n\in\mathbb{N}$ there exists a $U\in\mathcal{U}_n$ such that $K\times\{y\}\subseteq U$. Let $V$ be an open set in $X$ containing $K$ such that $K\times\{y\}\subseteq V\times\{y\}\subseteq U$. For each $n\in\mathbb{N}$ we get a finite subset $\{A_i^{(n)} : 1\leq i\leq k_n\}$ of $\mathcal{A}_n$ such that $K\subseteq\cup_{1\leq i\leq k_n}A_i^{(n)}=V_n$ (say). It follows that $K\subseteq V_n\cap V$ for all $n\in\mathbb{N}$. Choose $\Phi(K)=\cap_{n\in\mathbb{N}}(V_n\cap V)$. Thus for each compact subset $K$ of $X$ we obtain a $G_\delta$ subset $\Phi(K)$ of $X$ such that $K\subseteq\Phi(K)$. Also we can say that for each compact subset $K$ of $X$, for each $y\in Y$ and for each $n\in\mathbb{N}$ there exists a $U\in\mathcal{U}_n$ such that $\Phi(K)\times\{y\}\subseteq U$.

For each $n\in\mathbb{N}$, $\mathcal{W}_n=\{\Phi(K) : K\text{ is a compact subset of }X\}$ is a member of $\mathcal{G}_K$. Since $X$ satisfies $SS_1^*(\mathcal{G}_K,\mathcal{G}_\Gamma)$, there exists a sequence $(x_n)_{n\in\mathbb{N}}$ of elements of $X$ such that $\{St(x_n,\mathcal{W}_n) : n\in\mathbb{N}\}\in\mathcal{G}_\Gamma$ for $X$. Observe that for each $n\in\mathbb{N}$, $\mathcal{O}_n=\{O\subseteq Y : O\text{ is open in }Y\text{ and }\{x_n\}\times O\subseteq U\text{ for some }U\in\mathcal{U}_n\}$ is an open cover of $Y$. Since $Y$ is star-$K$-Scheepers, we obtain a sequence $(C_n)_{n\in\mathbb{N}}$ of compact subsets of $Y$ such that $\{St(C_n,\mathcal{O}_n) : n\in\mathbb{N}\}$ is an $\omega$-cover of $Y$. For each $n\in\mathbb{N}$ let $F_n=\{x_n\}\times C_n$. We now show that the sequence $(F_n)_{n\in\mathbb{N}}$ witnesses for $(\mathcal{U}_n)_{n\in\mathbb{N}}$ that $X\times Y$ is star-$K$-Scheepers. Let $F$ be a finite subset of $X\times Y$ and choose finite sets $F_1\subseteq X$ and $F_2\subseteq Y$ such that $F\subseteq F_1\times F_2$. Subsequently $F_1\subseteq St(x_n,\mathcal{W}_n)$ for all but finitely many $n\in\mathbb{N}$ and $F_2\subseteq St(C_n,\mathcal{O}_n)$ for infinitely many $n\in\mathbb{N}$. Choose a $n_0\in\mathbb{N}$ so that $F_1\subseteq St(x_{n_0},\mathcal{W}_{n_0})$ and $F_2\subseteq St(C_{n_0},\mathcal{O}_{n_0})$. It now remains to show that $F\subseteq St(F_{n_0},\mathcal{U}_{n_0})$. Let $(x,y)\in F_1\times F_2$. Choose a compact set $K\subseteq X$ such that $x, x_{n_0}\in\Phi(K)$. Choose also a set $O\in\mathcal{O}_{n_0}$ containing $y$ and a set $U_1\in\mathcal{U}_{n_0}$ such that $O\cap C_{n_0}\neq\emptyset$ and $\{x_{n_0}\}\times O\subseteq U_1$. Observe that $U_1\cap F_{n_0}\neq\emptyset$. Also we can find a set $U_2\in\mathcal{U}_{n_0}$ such that $\Phi(K)\times\{y\}\subseteq U_2$. Thus there is a set $U=U_1\cup U_2\in\mathcal{U}_{n_0}$ such that $(x,y)\in U$ and $U\cap F_{n_0}\neq\emptyset$. Consequently $(x,y)\in St(F_{n_0},\mathcal{U}_{n_0})$ and $F\subseteq F_1\times F_2\subseteq St(F_{n_0},\mathcal{U}_{n_0})$ and the proof is now complete.
\end{proof}

We now exhibit examples to observe the behaviour of extent of star-$K$-Scheepers spaces.
\begin{Ex}
\label{E6}
\emph{For any infinite cardinal $\kappa$ there exists a Tychonoff star-$K$-Scheepers strongly star-Lindel\"{o}f space $X(\kappa)$ such that $e(X(\kappa))\geq\kappa$.}\\
Choose $D=\{f_\alpha : \alpha<\kappa\}$, where for each $\alpha<\kappa$, $f_\alpha\in\{0,1\}^\kappa$ such that
\begin{equation*}
f_\alpha(\beta)=
\begin{cases}
1 & \text{if~} \beta=\alpha\\
0 & \text{otherwise}.
\end{cases}
\end{equation*}
Consider $$X(\kappa)=(\{0,1\}^\kappa\times[0,\kappa^+])\setminus ((\{0,1\}^\kappa\setminus D)\times\{\kappa^+\})$$ as a subspace of the product space $\{0,1\}^\kappa\times[0,\kappa^+]$. The space $X(\kappa)$ is Tychonoff strongly star-Lindel\"{o}f and $D\times\{\kappa^+\}$ is a closed and discrete subset of it (see \cite[Theorem 1]{HWWE}). Thus $e(X(\kappa))\geq\kappa$.

We now show that $X(\kappa)$ is $K$-starcompact (hence star-$K$-Scheepers). Let $\mathcal{U}$ be an open cover of $X(\kappa)$. There is a $\gamma<\kappa^+$ such that for each $\alpha<\kappa$ we get an open set $U_\alpha$ in $\{0,1\}^\kappa$ containing $f_\alpha\in D$ and the set $$(U_\alpha\times(\gamma,\kappa^+])\cap X(\kappa)$$ is contained in some member of $\mathcal{U}$. Subsequently we can obtain a $\gamma<\beta<\kappa^+$ such that $\{0,1\}^\kappa\times[0,\beta]$ is compact and $$D\times\{\kappa^+\}\subseteq St(\{0,1\}^\kappa\times[0,\beta],\mathcal{U}).$$ Observe that $\{0,1\}^\kappa\times[0,\kappa^+)$ is $K$-starcompact since it is countably compact. This gives us a compact subspace $K$ of $X(\kappa)$ such that $$\{0,1\}^\kappa\times[0,\kappa^+)\subseteq St(K,\mathcal{U}).$$ The set $(\{0,1\}^\kappa\times[0,\beta])\cup K$ guarantees for $\mathcal{U}$ that $X(\kappa)$ is $K$-starcompact.
\end{Ex}

\begin{Ex}
\label{E5}
\emph{For any cardinal $\kappa>\omega$ there exists a Tychonoff star-$K$-Scheepers space $Y(\kappa)$ such that $e(Y(\kappa))\geq\kappa$ which is not strongly star-Lindel\"{o}f.}\\
Let $D=\{d_\alpha : \alpha<\kappa\}$ be the discrete space of cardinality $\kappa$. Consider $$Y(\kappa)=(\beta D\times[0,\kappa^+])\setminus((\beta D\setminus D)\times\{\kappa^+\})$$ as a subspace of $\beta D\times[0,\kappa^+]$. The space $Y(\kappa)$ is Tychonoff star-$K$-Scheepers (see \cite[Lemma 2.3]{sKH}). Observe that $D\times\{\kappa^+\}$ is a closed discrete subset of $Y(\kappa)$ and hence $e(Y(\kappa))\geq\kappa$.

We now show that $Y(\kappa)$ is not strongly star-Lindel\"{o}f.  Choose $$\mathcal{U}=\{\beta D\times[0,\kappa^+)\}\cup\{\{d_\alpha\}\times[0,\kappa^+] : \alpha<\kappa\}$$ and clearly $\mathcal{U}$ is an open cover of $Y(\kappa)$. Let $A$ be any countable subset of $Y(\kappa)$. Subsequently we get a $\gamma<\kappa$ with $$A\cap(\{d_\alpha\}\times[0,\kappa^+])=\emptyset$$ for all $\alpha>\gamma$. If we choose a $\gamma<\beta<\kappa$, then $(d_\beta,\kappa^+)\notin St(A,\mathcal{U})$ since $\{d_\beta\}\times[0,\kappa^+]$ is the only member of $\mathcal{U}$ containing the point $( d_\beta,\kappa^+)$. Thus $Y(\kappa)$ is not strongly star-Lindel\"{o}f.
\end{Ex}

We call a space $X$ metacompact (resp. subparacompact) \cite{Burke} if every open cover of it has a point-finite open refinement (resp. $\sigma$-discrete closed refinement). The spaces $X(\kappa)$ and $Y(\kappa)$ (in the preceding examples) are neither metacompact nor subparacompact since they have a non-compact countably compact closed subspace homeomorphic to $\kappa^+$. The following question may immediately arise.

\begin{Prob}
\label{Q1}
Can the extent of a metacompact (or, subparacompact) star-$K$-Scheepers space be arbitrarily large?
\end{Prob}

\subsection{Game theoretic observations}
Following \cite{coc1,LjSM}, we consider infinitely long games $\mathsf{\bf G}_{\prod}(\mathcal{O},\mathcal{B})$ corresponding to the selection principles ${\textstyle \prod}(\mathcal{O},\mathcal{B})$, where
\begin{eqnarray*}
{\textstyle \prod}&\in&\{\Sf,\Uf,\SSf,\SUf,\SSSf,\SSSC\},\\
\mathsf{\bf G}_{\prod}&\in&\{\Gf,\Guf,\SGf,\SGuf,\SSGf,\SSGC\}
\end{eqnarray*} and $\mathcal{B}\in\{\mathcal{O},\Omega,\Gamma\}$.

The game $\Guf(\mathcal{O},\Omega)$ on a space $X$ corresponding to the selection principle $\Uf(\mathcal{O},\Omega)$ is played as follows. Players ONE and TWO play an inning for each positive integer $n$. In the $n$th inning ONE chooses an open cover $\mathcal{U}_n$ of $X$ and TWO responds by selecting a finite subset $\mathcal{V}_n$ of $\mathcal{U}_n$. TWO wins the play $\mathcal{U}_1, \mathcal{V}_1, \mathcal{U}_2, \mathcal{V}_2,\ldots, \mathcal{U}_n, \mathcal{V}_n,\ldots$ of this game if $\{\cup\mathcal{V}_n : n\in\mathbb{N}\}$ is an $\omega$-cover of $X$ or there is some $n\in\mathbb{N}$ such that $X=\cup\mathcal{V}_n$; otherwise ONE wins. The game $\SSGC(\mathcal{O},\Omega)$ on a space $X$ corresponding to the selection principle $\SSSC(\mathcal{O},\Omega)$ is played as follows. Players ONE and TWO play an inning for each positive integer $n$. In the $n$th inning ONE chooses an open cover $\mathcal{U}_n$ of $X$ and TWO responds by selecting a compact subset $K_n$ of $X$. TWO wins the play $\mathcal{U}_1, K_1, \mathcal{U}_2, K_2,\dotsc, \mathcal{U}_n, K_n,\dotsc$ of this game if $\{St(K_n,\mathcal{U}_n) : n\in\mathbb{N}\}$ is an $\omega$-cover of $X$; otherwise ONE wins. Other games can be similarly defined.

It is easy to see that if ONE does not have a winning strategy in any of the above game on $X$, then $X$ satisfies the selection principle corresponding to that game. Recall that two games are said to be equivalent if whenever one
of the players has a winning strategy in one of the games, then that same player has a winning strategy in the other game \cite{Pearl}.

\begin{Th}[{\cite[Theorem 5.1]{dcna22-2}}]
\label{TG11}
For a paracompact Hausdorff space $X$ the games
$\Guf(\mathcal{O},\Omega)$ and $\SGuf(\mathcal{O},\Omega)$ are
equivalent.
\end{Th}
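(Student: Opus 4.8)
The plan is to verify the four strategy-transfer implications needed for game equivalence, namely that a winning strategy for a given player in one game yields a winning strategy for the same player in the other. Two of these are immediate and require no separation or covering hypotheses, since for every finite $\mathcal{V}_n\subseteq\mathcal{U}_n$ one has $\cup\mathcal{V}_n\subseteq St(\cup\mathcal{V}_n,\mathcal{U}_n)$. Consequently any run of $\Guf(\mathcal{O},\Omega)$ won by TWO is, verbatim, a run of $\SGuf(\mathcal{O},\Omega)$ won by TWO (enlarging each $\cup\mathcal{V}_n$ to its star keeps an $\omega$-cover an $\omega$-cover, and keeps the equality $\cup\mathcal{V}_n=X$ as $St=X$); thus a winning strategy for TWO in $\Guf(\mathcal{O},\Omega)$ is already winning in $\SGuf(\mathcal{O},\Omega)$. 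Dually, if TWO loses a run of $\SGuf(\mathcal{O},\Omega)$ then TWO loses the corresponding run of $\Guf(\mathcal{O},\Omega)$ (if $\{St(\cup\mathcal{V}_n,\mathcal{U}_n)\}$ is not an $\omega$-cover, neither is the smaller family $\{\cup\mathcal{V}_n\}$), so a winning strategy for ONE in $\SGuf(\mathcal{O},\Omega)$ is at once a winning strategy for ONE in $\Guf(\mathcal{O},\Omega)$.

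The two remaining implications are where paracompactness enters. The structural input I would use is Stone's theorem: a paracompact Hausdorff space is fully normal (see \cite{Engelking}), so every open cover $\mathcal{U}$ admits an open star-refinement $\mathcal{W}$, i.e.\ an open cover such that for each $W\in\mathcal{W}$ there is some $U\in\mathcal{U}$ with $St(W,\mathcal{W})\subseteq U$. Fix once and for all a function assigning to each open cover such a star-refinement. To show that a winning strategy $\sigma$ for TWO in $\SGuf(\mathcal{O},\Omega)$ produces one for TWO in $\Guf(\mathcal{O},\Omega)$, I would have TWO simulate a play of $\SGuf(\mathcal{O},\Omega)$ in the background: when ONE plays $\mathcal{U}_n$ in $\Guf(\mathcal{O},\Omega)$, TWO passes the star-refinement $\mathcal{W}_n$ of $\mathcal{U}_n$ to $\sigma$ as ONE's move, receiving a finite $\mathcal{V}_n'\subseteq\mathcal{W}_n$. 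The crucial point is that
$$St\Big(\cup\mathcal{V}_n',\mathcal{W}_n\Big)=\bigcup_{V\in\mathcal{V}_n'}St(V,\mathcal{W}_n)\subseteq\bigcup_{V\in\mathcal{V}_n'}U_V,$$
where $U_V\in\mathcal{U}_n$ is chosen with $St(V,\mathcal{W}_n)\subseteq U_V$; since $\mathcal{V}_n'$ is finite, $\mathcal{V}_n:=\{U_V:V\in\mathcal{V}_n'\}$ is a legitimate finite subset of $\mathcal{U}_n$ for TWO to play. As $\sigma$ is winning, $\{St(\cup\mathcal{V}_n',\mathcal{W}_n):n\in\mathbb{N}\}$ is an $\omega$-cover (or some term equals $X$); the displayed inclusion then forces $\{\cup\mathcal{V}_n:n\in\mathbb{N}\}$ to be an $\omega$-cover (or some $\cup\mathcal{V}_n=X$), so TWO wins $\Guf(\mathcal{O},\Omega)$.

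For the last implication, that a winning strategy for ONE in $\Guf(\mathcal{O},\Omega)$ yields one for ONE in $\SGuf(\mathcal{O},\Omega)$, I would run the same dictionary in reverse: ONE plays in $\SGuf(\mathcal{O},\Omega)$ the star-refinement $\mathcal{W}_n$ of the cover $\mathcal{U}_n$ dictated by the $\Guf$-strategy, and translates TWO's finite response $\mathcal{V}_n\subseteq\mathcal{W}_n$ back to the finite set $\{U_V:V\in\mathcal{V}_n\}\subseteq\mathcal{U}_n$ (via the same star-refinement inclusion) to feed the $\Guf$-strategy and obtain the next cover. Because the $\Guf$-strategy defeats TWO, the family $\{\cup\{U_V:V\in\mathcal{V}_n\}:n\in\mathbb{N}\}$ is not an $\omega$-cover and no term equals $X$; since each $St(\cup\mathcal{V}_n,\mathcal{W}_n)$ is contained in the corresponding $\cup\{U_V:V\in\mathcal{V}_n\}$, the same failure passes to the star-family, and ONE wins $\SGuf(\mathcal{O},\Omega)$. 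The only genuinely nontrivial step throughout is the star-refinement inclusion $St(V,\mathcal{W}_n)\subseteq U_V$, which is exactly where full normality (hence paracompactness together with Hausdorffness) is indispensable; the rest is the monotonicity of the $\omega$-cover condition under passing to supersets or subsets, together with the routine verification that the simulated plays are legitimate.
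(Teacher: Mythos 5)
Your argument is correct and complete. Note that the paper itself does not prove Theorem~\ref{TG11} -- it defers to the cited reference -- so the natural comparison is with the proof the paper does give for the parallel statement, Theorem~\ref{TG15}. The overall mechanism is the same in both: the two ``easy'' transfers follow from $\cup\mathcal{V}_n\subseteq St(\cup\mathcal{V}_n,\mathcal{U}_n)$ exactly as you say, and for the two substantive transfers one runs a simulation in which ONE's cover $\mathcal{U}_n$ is replaced by a refinement $\mathcal{W}_n$ chosen so that the star of TWO's selection with respect to $\mathcal{W}_n$ can be re-absorbed into finitely many members of $\mathcal{U}_n$. The difference is in how paracompactness is harvested: for $\SSGC(\mathcal{O},\Omega)$ the paper takes $\mathcal{W}_n$ locally finite, so that a \emph{compact} selector meets only finitely many members of $\mathcal{W}_n$ and its star is a finite union; for $\SGuf(\mathcal{O},\Omega)$ that device is unavailable (a finite subfamily of a locally finite cover can still have a star meeting infinitely many members), and your use of Stone's theorem to take $\mathcal{W}_n$ an open star-refinement, with $St(V,\mathcal{W}_n)\subseteq U_V\in\mathcal{U}_n$ for each $V$, is exactly the right substitute and is where the Hausdorff hypothesis is genuinely used. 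One small point worth making explicit when writing this up: the strategy you build for the ``TWO'' direction is legitimate because the background $\SGuf$-play is determined by the visible history of the $\Guf$-play once the star-refinement assignment is fixed in advance, which you do.
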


\begin{Th}[{\cite[Theorem 5.2]{dcna22-2}}]
\label{TG13}
For a metacompact space $X$ the games $\Guf(\mathcal{O},\Omega)$
and $\SSGf(\mathcal{O},\Omega)$ are equivalent.
\end{Th}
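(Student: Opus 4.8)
The plan is to establish the four implications that equivalence requires: for each player $P\in\{\text{ONE},\text{TWO}\}$, a winning strategy for $P$ in $\Guf(\mathcal{O},\Omega)$ is to be converted into one for $P$ in $\SSGf(\mathcal{O},\Omega)$, and conversely. Every conversion rests on two elementary ways of passing between a finite subfamily $\mathcal{V}\subseteq\mathcal{U}$ and a finite set $K\subseteq X$. First, if $K$ contains a point of each member of $\mathcal{V}$, then every $V\in\mathcal{V}$ meets $K$, so $\cup\mathcal{V}\subseteq St(K,\mathcal{U})$. Second, if $\mathcal{W}$ is a point-finite open refinement of $\mathcal{U}$ and $K$ is finite, then only finitely many members of $\mathcal{W}$ meet $K$; choosing for each such $W$ a member $U_W\in\mathcal{U}$ with $W\subseteq U_W$ yields a finite $\mathcal{V}\subseteq\mathcal{U}$ with $St(K,\mathcal{W})\subseteq\cup\mathcal{V}$. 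Metacompactness is invoked precisely to produce the point-finite refinement used in the second passage.

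Two of the four implications use only the first passage and need no covering hypothesis. If TWO has a winning strategy in $\Guf(\mathcal{O},\Omega)$, then in $\SSGf(\mathcal{O},\Omega)$ TWO simulates the Scheepers game: against ONE's cover $\mathcal{U}_n$, TWO computes the prescribed finite selection $\mathcal{V}_n$ and plays a finite set $K_n$ meeting each member of $\mathcal{V}_n$. Since $\cup\mathcal{V}_n\subseteq St(K_n,\mathcal{U}_n)$, an $\omega$-cover formed by the $\cup\mathcal{V}_n$ forces the larger sets $St(K_n,\mathcal{U}_n)$ to form an $\omega$-cover, and the escape condition $\cup\mathcal{V}_n=X$ transfers as well. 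Dually, if ONE has a winning strategy in $\SSGf(\mathcal{O},\Omega)$, then ONE plays the same covers in $\Guf(\mathcal{O},\Omega)$, first turning each finite selection $\mathcal{V}_n$ of TWO into a finite set $K_n$ as above before consulting the strategy; because $\cup\mathcal{V}_n\subseteq St(K_n,\mathcal{U}_n)$, the failure of the $St(K_n,\mathcal{U}_n)$ to form an $\omega$-cover (guaranteed by ONE's strategy) forces the failure of the $\cup\mathcal{V}_n$, so ONE wins.

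The remaining two implications are where metacompactness is essential, and they are the heart of the argument. Suppose TWO has a winning strategy in $\SSGf(\mathcal{O},\Omega)$. To play $\Guf(\mathcal{O},\Omega)$, when ONE plays $\mathcal{U}_n$ TWO first fixes a point-finite open refinement $\mathcal{W}_n$ of $\mathcal{U}_n$, feeds the sequence $(\mathcal{W}_i)$ to the given star-strategy to obtain finite sets $K_n$, and then, via the second passage, selects a finite $\mathcal{V}_n\subseteq\mathcal{U}_n$ with $St(K_n,\mathcal{W}_n)\subseteq\cup\mathcal{V}_n$, which TWO plays. A win of the simulated star-play (an $\omega$-cover, or some $St(K_n,\mathcal{W}_n)=X$) then propagates to the $\cup\mathcal{V}_n$, so TWO wins. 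Symmetrically, if ONE has a winning strategy in $\Guf(\mathcal{O},\Omega)$, then in $\SSGf(\mathcal{O},\Omega)$ ONE plays a point-finite open refinement $\mathcal{W}_n$ of the cover $\mathcal{U}_n$ dictated by the strategy; each finite response $K_n$ of TWO is converted by the second passage into a finite $\mathcal{V}_n\subseteq\mathcal{U}_n$ with $St(K_n,\mathcal{W}_n)\subseteq\cup\mathcal{V}_n$, which is fed back to the Scheepers strategy. Since that strategy defeats the selections $\mathcal{V}_n$, it defeats the smaller stars $St(K_n,\mathcal{W}_n)$, so ONE wins.

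The only genuine obstacle is the control of infinite stars in the two directions passing from the star game $\SSGf(\mathcal{O},\Omega)$ into the finite-selection game $\Guf(\mathcal{O},\Omega)$: a single finite set may meet infinitely many members of an arbitrary cover, so $St(K,\mathcal{U})$ need not be captured by any finite subfamily. Metacompactness dissolves this difficulty, since the star of a finite set over a point-finite family is automatically a finite union. What then remains is routine bookkeeping: checking that the simulated histories are legitimate plays of the auxiliary game (the refinements are genuine open covers, and the strategies are consulted on consistent sequences of moves) and that the $\omega$-cover condition, together with its $X$-escape clause, transfers intact under the inclusions $\cup\mathcal{V}_n\subseteq St(K_n,\mathcal{U}_n)$ and $St(K_n,\mathcal{W}_n)\subseteq\cup\mathcal{V}_n$.
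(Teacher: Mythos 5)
Your proof is correct and follows essentially the same route the paper takes: the two ``easy'' implications via picking one point from each member of a finite selection, and the two substantive ones via point-finite open refinements, exactly mirroring the paper's proof of the parallel Theorem~\ref{TG15} (which uses locally finite refinements and compact kernels in place of your point-finite refinements and finite kernels). No gaps; the bookkeeping you defer is indeed routine.
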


A similar observation for $\SSGC(\mathcal{O},\Omega)$ is the following.
\begin{Th}
\label{TG15}
For a paracompact space $X$ the games $\Guf(\mathcal{O},\Omega)$
and $\SSGC(\mathcal{O},\Omega)$ are equivalent.
\end{Th}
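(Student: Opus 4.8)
The plan is to prove the equivalence by translating winning strategies back and forth for \emph{each} player, so that four implications are established: TWO wins $\Guf(\mathcal{O},\Omega)$ $\Rightarrow$ TWO wins $\SSGC(\mathcal{O},\Omega)$, its converse, and the two analogous statements for ONE. The whole argument rests on two elementary observations. First, if $\mathcal{V}$ is a finite subfamily of an open cover $\mathcal{U}$ and $F$ is formed by choosing one point from each member of $\mathcal{V}$, then $F$ is finite (hence compact) and $\cup\mathcal{V}\subseteq St(F,\mathcal{U})$, since each member of $\mathcal{V}$ lies in $\mathcal{U}$ and meets $F$. Second, if $\mathcal{W}$ is a locally finite open refinement of $\mathcal{U}$ and $K$ is compact, then $K$ meets only finitely many members $W_1,\dots,W_j$ of $\mathcal{W}$, so that $St(K,\mathcal{W})=W_1\cup\dots\cup W_j$; choosing for each $W_i$ a member $U_i\in\mathcal{U}$ with $W_i\subseteq U_i$ and putting $\mathcal{V}=\{U_1,\dots,U_j\}$ gives a finite $\mathcal{V}\subseteq\mathcal{U}$ with $St(K,\mathcal{W})\subseteq\cup\mathcal{V}$. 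I will freely use that the $\omega$-cover property is monotone under enlarging members, and that since any family containing $X$ is automatically an $\omega$-cover, the exceptional clause ``$\cup\mathcal{V}_n=X$'' in the $\Guf$-game is subsumed by the $\omega$-cover condition.

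For the two directions that do not require paracompactness I would use the first observation. If TWO has a winning strategy $\sigma$ in $\Guf(\mathcal{O},\Omega)$, then in $\SSGC(\mathcal{O},\Omega)$ TWO lets ONE's covers drive $\sigma$, computes $\mathcal{V}_n=\sigma(\mathcal{U}_1,\dots,\mathcal{U}_n)$, and answers with the transversal $K_n=F_n$; from $\cup\mathcal{V}_n\subseteq St(F_n,\mathcal{U}_n)$ and the fact that $\{\cup\mathcal{V}_n\}$ is an $\omega$-cover, it follows that $\{St(K_n,\mathcal{U}_n)\}$ is one as well. Dually, if ONE has a winning strategy $\sigma$ in $\SSGC(\mathcal{O},\Omega)$, ONE plays it in $\Guf(\mathcal{O},\Omega)$ and, whenever TWO answers with a finite $\mathcal{V}_n$, feeds $\sigma$ the transversal $F_n$ in place of a compact set; if TWO could force $\{\cup\mathcal{V}_n\}$ to be an $\omega$-cover, then $\{St(F_n,\mathcal{U}_n)\}$ would be one too, contradicting that $\sigma$ is winning for ONE.

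The remaining two directions are the heart of the proof and are exactly where paracompactness enters, through the second observation, which lets me replace $St(K_n,\mathcal{W}_n)$ (a priori a union of possibly infinitely many members of the cover) by a finite subfamily. If TWO has a winning strategy $\tau$ in $\SSGC(\mathcal{O},\Omega)$, then in $\Guf(\mathcal{O},\Omega)$ TWO responds to ONE's cover $\mathcal{U}_n$ by first fixing a locally finite open refinement $\mathcal{W}_n$, feeding $(\mathcal{W}_n)$ to $\tau$ to obtain $K_n$, and then playing the finite $\mathcal{V}_n\subseteq\mathcal{U}_n$ supplied by the second observation, so $St(K_n,\mathcal{W}_n)\subseteq\cup\mathcal{V}_n$; since $(\mathcal{W}_n)$ is a legitimate run against $\tau$, the family $\{St(K_n,\mathcal{W}_n)\}$ is an $\omega$-cover, whence so is the larger $\{\cup\mathcal{V}_n\}$. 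Symmetrically, if ONE has a winning strategy $\sigma$ in $\Guf(\mathcal{O},\Omega)$, then in $\SSGC(\mathcal{O},\Omega)$ ONE plays a locally finite open refinement $\mathcal{W}_n$ of $\sigma$'s cover $\mathcal{U}_n$; after TWO answers with a compact $K_n$, ONE extracts the finite $\mathcal{V}_n\subseteq\mathcal{U}_n$ with $St(K_n,\mathcal{W}_n)\subseteq\cup\mathcal{V}_n$ and returns it to $\sigma$ to generate the next cover, so that $\sigma$ prevents $\{\cup\mathcal{V}_n\}$ from being an $\omega$-cover and, the stars being contained in the $\cup\mathcal{V}_n$, the family $\{St(K_n,\mathcal{W}_n)\}$ cannot be one either.

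The only delicate point beyond this is bookkeeping: each translated strategy must depend solely on the legitimately available history (ONE's covers, respectively TWO's compact sets), so I would fix once and for all a choice of locally finite refinement for every open cover and a choice of transversal for every finite subfamily, which makes all four simulated strategies well defined. I expect the genuine obstacle to be precisely the conversion performed by the second observation, namely turning $St(K_n,\mathcal{W}_n)$ into a finite subfamily of the original cover; this is the single step that forces paracompactness and parallels its use in Theorem~\ref{TG11}.
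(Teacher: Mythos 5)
Your argument is correct and takes essentially the same route as the paper's proof: the two translations that need no hypothesis are handled by the finite-transversal observation (which the paper merely declares ``easy to see''), and the two that require paracompactness use exactly the paper's device of passing to a locally finite open refinement $\mathcal{W}_n$ so that the compact set $K_n$ meets only finitely many of its members and $St(K_n,\mathcal{W}_n)$ can be absorbed into a finite subfamily of $\mathcal{U}_n$. Your closing remarks on fixing choices of refinements and transversals so the simulated strategies depend only on the visible history, and on the exceptional clause $\cup\mathcal{V}_n=X$ being subsumed by the $\omega$-cover condition, are both consistent with the paper's conventions.
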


Combining Theorem~\ref{TG11}, Theorem~\ref{TG13} and Theorem~\ref{TG15}, we obtain the following.
\begin{Cor}
\label{CG1}
For a paracompact Hausdorff space $X$ the following games are
equivalent.
\begin{enumerate}[wide=0pt,label={\upshape(\arabic*)}]
  \item $\Guf(\mathcal{O},\Omega)$.
  \item $\SSGf(\mathcal{O},\Omega)$.
  \item $\SSGC(\mathcal{O},\Omega)$.
  \item $\SGuf(\mathcal{O},\Omega)$.
\end{enumerate}
\end{Cor}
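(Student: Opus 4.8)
The plan is to obtain the four-way equivalence purely formally, by transitivity, from the three preceding theorems; no new strategy-transfer construction is required. The key structural observation is that ``being an equivalent game'' is an equivalence relation on games: by the definition recalled above it is symmetric, it is trivially reflexive, and it is transitive, since a winning strategy for a fixed player that transfers from game $A$ to game $B$ and from $B$ to $C$ then transfers from $A$ to $C$ (and, by symmetry, back). Consequently it suffices to exhibit a single game to which each of the four listed games is equivalent, and then read off pairwise equivalence.

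I would take the Scheepers game $\Guf(\mathcal{O},\Omega)$, which is game~(1) in the list, as this hub. Before invoking the theorems I would check that their hypotheses are simultaneously met: since $X$ is paracompact Hausdorff it is in particular paracompact, and it is metacompact because every locally finite open refinement is point-finite. Hence Theorem~\ref{TG11} applies and gives that the star-Scheepers game $\SGuf(\mathcal{O},\Omega)$ (game~(4)) is equivalent to $\Guf(\mathcal{O},\Omega)$; Theorem~\ref{TG13} applies and gives $\SSGf(\mathcal{O},\Omega)\equiv\Guf(\mathcal{O},\Omega)$ (game~(2)); and Theorem~\ref{TG15} applies and gives $\SSGC(\mathcal{O},\Omega)\equiv\Guf(\mathcal{O},\Omega)$ (game~(3)).

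Finally I would chain these through the hub: each of the games~(1)--(4) is equivalent to $\Guf(\mathcal{O},\Omega)$, so by transitivity of game equivalence they are pairwise equivalent, which is exactly the assertion. There is no genuine obstacle in this argument; the only step meriting attention is the hypothesis bookkeeping, namely confirming that the single blanket assumption ``paracompact Hausdorff'' is strong enough to trigger all three theorems at once—most notably the metacompactness demanded by Theorem~\ref{TG13}, which is where the stated hypothesis is strictly stronger than what any one theorem needs.
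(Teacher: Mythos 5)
Your proposal is correct and matches the paper's own argument, which likewise obtains the corollary by combining Theorems~\ref{TG11}, \ref{TG13} and~\ref{TG15} through the hub game $\Guf(\mathcal{O},\Omega)$ and transitivity of game equivalence. Your explicit check that a paracompact (Hausdorff) space is metacompact, so that Theorem~\ref{TG13} applies, is the only hypothesis bookkeeping needed, and you have done it correctly.
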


In \cite[Example 5.4]{dcna22-2}, we proved that paracompactness
in Theorem~\ref{TG11} and metacompactness in Theorem~\ref{TG13}
are essential. In the following example we show that
paracompactness cannot be dropped also in Theorem~\ref{TG15}.
\begin{Ex}
\label{E4}
\emph{Paracompactness in Theorem~\ref{TG15} is essential}.\\
Let $X=X(\kappa)$ be the space as in Example~\ref{E6} with
$\kappa>\omega$. The space $X$ is Tychonoff $K$-starcompact but not paracompact. Since
$D\times\{\kappa^+\}$ is a closed discrete subset of $X$, $X$ is
not Scheepers and hence TWO has no winning strategy
in $\Guf(\mathcal{O},\Omega)$ on $X$.

We claim that TWO has a winning strategy in
$\SSGC(\mathcal{O},\Omega)$ on $X$. Let us define a strategy
$\sigma$ for TWO in $\SSGC(\mathcal{O},\Omega)$ on $X$ as
follows. In the $n$th inning, suppose that $\mathcal{U}_n$ is
the move of ONE in $\SSGC(\mathcal{O},\Omega)$. We get a compact
set $K_n\subseteq X$ such that $X=St(K_n,\mathcal{U}_n)$ since
$X$ is $K$-starcompact. Define
$\sigma(\mathcal{U}_1,\mathcal{U}_2,\dotsc,\mathcal{U}_n)=K_n$
as the response of TWO in $\SSGC(\mathcal{O},\Omega)$. Thus we
obtain a winning strategy $\sigma$ for TWO in
$\SSGC(\mathcal{O},\Omega)$ on $X$.
\end{Ex}

We now observe that in general the games
$\SGuf(\mathcal{O},\Omega)$ and $\SSGC(\mathcal{O},\Omega)$ are
not equivalent.
\begin{Ex}
\label{E1}
\emph{There exists a Hausdorff space in which TWO has a winning
strategy in $\SGuf(\mathcal{O},\Omega)$, but no winning strategy
in $\SSGC(\mathcal{O},\Omega)$.}\\
Let $X$ be the space as in Example~\ref{E3}. Observe that $X$ is
a Hausdorff space which is not star-$K$-Scheepers. It follows that TWO has no winning strategy in $\SSGC(\mathcal{O},\Omega)$ on $X$.

We now show that TWO has a winning strategy in
$\SGuf(\mathcal{O},\Omega)$ on $X$.
Define a strategy for TWO in $\SGuf(\mathcal{O},\Omega)$ as
follows. Choose $\mathcal{U}_1$ as the first move of ONE in
$\SGuf(\mathcal{O},\Omega)$. Then we can find a countable subset
$A$ of $P$ and a member $U_1$ of $\mathcal{U}_1$ such that
$U_p(A)\subseteq U_1$. By the construction of the topology on
$X$, we have $$(P\setminus A)\cup U_p(A)\subseteq
St(U_1,\mathcal{U}_1).$$ TWO responds in
$\SGuf(\mathcal{O},\Omega)$ by selecting
$\sigma(\mathcal{U}_1)=\{U_1\}$. For each $x_\alpha\in A$,
$$C_{x_\alpha}=\{x_\alpha\}\cup\{(x_\alpha,y_n) : n\in\mathbb{N}\}$$
is countable. Since $C=\cup_{x_\alpha\in A}C_{x_\alpha}$ is
countable, we enumerate $C$ as $\{a_{n+1} : n\in\mathbb{N}\}$.
Observe that $X=C\cup(P\setminus A)\cup U_p(A)$. Let the $n$th
move of ONE in $\SGuf(\mathcal{O},\Omega)$ be $\mathcal{U}_n$,
$n\geq 2$. Consider
$\sigma(\mathcal{U}_1,\mathcal{U}_2,\dotsc,\mathcal{U}_n)
=\{U_n\}$ as the response of TWO in $\SGuf(\mathcal{O},\Omega)$,
where $U_n\in\mathcal{U}_n$ with $a_n\in U_n$. Thus we define
the legitimate strategy $\sigma$ for TWO in
$\SGuf(\mathcal{O},\Omega)$ on $X$. It can be easily shown that
$\sigma$ is a winning strategy for TWO in
$\SGuf(\mathcal{O},\Omega)$ on $X$.
\end{Ex}

From the above example and Figure~\ref{dig2} it is also clear
that there exists a Hausdorff space $X$ such that TWO has a
winning strategy in $\SGuf(\mathcal{O},\Omega)$, but no winning
strategy in $\SSGf(\mathcal{O},\Omega)$ on $X$. Thus in general
the games $\SGuf(\mathcal{O},\Omega)$ and
$\SSGf(\mathcal{O},\Omega)$ are not equivalent.

Next example shows that in general the games
$\SSGC(\mathcal{O},\Omega)$ and $\SSGf(\mathcal{O},\Omega)$ are
not equivalent.
\begin{Ex}
\label{E2}
\emph{For any cardinal $\kappa>\omega$ there exists a Tychonoff space in which TWO has a winning strategy in $\SSGC(\mathcal{O},\Omega)$, but no winning strategy in $\SSGf(\mathcal{O},\Omega)$.}\\
Consider $X=Y(\kappa)$ as in Example~\ref{E5}. By \cite[Lemma 2.3]{sKH}, $X$ is a Tychonoff $K$-starcompact space. Also $X$ is not strongly star-Scheepers since it is not strongly star-Lindel\"{o}f. It now follows that TWO has no winning strategy in $\SSGf(\mathcal{O},\Omega)$ on $X$. If we proceed similarly as in Example~\ref{E4} by using $K$-starcompactness of $X$, it can be easily shown that TWO has a winning strategy in $\SSGC(\mathcal{O},\Omega)$ on $X$.
\end{Ex}

The relation between the winning strategies of the players ONE
and TWO in the games (for any space $X$) considered here can be
outlined into the following diagram (Figure~\ref{dig2}), where
the implication $G\rightarrow H$ holds if winning strategies for
TWO in $G$ produce winning strategies for TWO in $H$ as well as
winning strategies for ONE in $H$ produce winning strategies for
ONE in $G$ and the selection principle for $G$ implies the
selection principle for $H$, and also we use $G\dashrightarrow
H$ when winning strategies for TWO in $G$ need not produce
winning strategies for TWO in $H$ (for more details, see
\cite{dcna22, dcna22-2}).

\begin{figure}[h]
\begin{adjustbox}{keepaspectratio,center,
 max width=.8\textwidth,max height=.8\textheight}
\begin{tikzpicture}[scale=1,node distance=2.5cm and 2.5cm]
\node (k1) {$\SGuf(\mathcal{O},\Gamma)$};
\node (k2) [right=of k1] {$\SGuf(\mathcal{O},\Omega)$};
\node (k3) [right=of k2] {$\SGf(\mathcal{O},\mathcal{O})$};

\node (a1) [below=of k1] {$\SSGC(\mathcal{O},\Gamma)$};
\node (a2) [below=of k2] {$\SSGC(\mathcal{O},\Omega)$};
\node (a3) [below=of k3] {$\SSGC(\mathcal{O},\mathcal{O})$};

\node (b1) [below=of a1] {$\SSGf(\mathcal{O},\Gamma)$};
\node (b2) [below=of a2] {$\SSGf(\mathcal{O},\Omega)$};
\node (b3) [below=of a3] {$\SSGf(\mathcal{O},\mathcal{O})$};

\node (c1) [below=of b1] {$\Guf(\mathcal{O},\Gamma)$};
\node (c2) [below=of b2] {$\Guf(\mathcal{O},\Omega)$};
\node (c3) [below=of b3] {$\Gf(\mathcal{O},\mathcal{O})$};

\draw[->](k1) edge (k2);
\draw[->](k2) edge (k3);
\draw[->](a1) edge (k1);
\draw[->](a1) edge (a2);
\draw[->](a2) edge (a3);
\draw[->](a2) edge (k2);
\draw[->](a3) edge (k3);
\draw[->](b1) edge (a1);
\draw[->](b1) edge (b2);
\draw[->](b2) edge (b3);
\draw[->](b2) edge (a2);
\draw[->](b3) edge (a3);
\draw[->](c1) edge (c2);
\draw[->](c1) edge (b1);
\draw[->](c2) edge (c3);
\draw[->](c2) edge (b2);
\draw[->](c3) edge (b3);
\draw[->,dashed,thick,postaction={decorate,decoration={raise=2ex,text
along path,text align=center,text={|\tiny|{\cite[Example
3.3]{dcna22}}}}}](k1)[bend left=60] to (a1);
\draw[->,dashed,thick,postaction={decorate,decoration={raise=2ex,text
along path,text align=center,text={|\tiny|{\cite[Example
3.2]{dcna22}}}}}](a1)[bend left=60] to (b1);
\draw[->,dashed,thick,postaction={decorate,decoration={raise=2ex,text
along path,text align=center,text={|\tiny|{\cite[Example
3.1]{dcna22}}}}}](b1)[bend left=60] to (c1);
\draw[->,dashed,thick,postaction={decorate,decoration={raise=-2ex,text
along path,text align=center,text={|\tiny|{\cite[Example
3.3]{dcna22}}}}}](k3)[bend right=60] to (a3);
\draw[->,dashed,thick,postaction={decorate,decoration={raise=-2ex,text
along path,text align=center,text={|\tiny|{\cite[Example
3.2]{dcna22}}}}}](a3)[bend right=60] to (b3);
\draw[->,dashed,thick,postaction={decorate,decoration={raise=-2ex,text
along path,text align=center,text={|\tiny|{\cite[Example
3.1]{dcna22}}}}}](b3)[bend right=60] to (c3);
\draw[->,dashed,thick,postaction={decorate,decoration={raise=-2ex,text
along path,text align=center,text={|\tiny|Example
\getrefnumber{E1}}}}](k2)  [bend right=60] to (a2);
\draw[->,dashed,thick,postaction={decorate,decoration={raise=2ex,text
along path,text align=center,text={|\tiny|Example
\getrefnumber{E2}}}}](a2) to [bend left=60](b2);
\draw[->,dashed,thick,postaction={decorate,decoration={raise=-2ex,text
along path,text align=center,text={|\tiny|Example
\getrefnumber{E4}}}}](b2) to [bend right=60](c2);
\end{tikzpicture}
\end{adjustbox}
\caption{Diagram for winning strategies of player TWO}
\label{dig2}
\end{figure}

\section{Further observations on star-$K$-Scheepers spaces}
In this section we investigate preservation of the
star-$K$-Scheepers property under certain topological
operations. We also consider certain instances where such
preservation type of properties fail to hold. Now observe that
the star-$K$-Scheepers property is an invariant of continuous
mappings and is hereditary for clopen subsets.
\begin{Ex}
\label{E8}
\emph{There exists a Tychonoff star-$K$-Scheepers space having a regular-closed subset which is not star-$K$-Scheepers.}\\
Let $X=Y(\kappa)$ be the space as in Example~\ref{E5} with
$\kappa=\mathfrak{c}$ i.e. $$X=(\beta
D\times[0,\mathfrak{c}^+))\cup(D\times\{\mathfrak{c}^+\}).$$ The
space $X$ is Tychonoff star-$K$-Scheepers. Also consider $$Y=(\beta
D\times[0,\mathfrak{c}))\cup(D\times\{\mathfrak{c}\})$$ as a
subspace of $\beta D\times[0,\mathfrak{c}]$. We now show that
$Y$ is not star-$K$-Scheepers. For each $\alpha<\mathfrak{c}$
let $U_\alpha=\{d_\alpha\}\times(\alpha,\mathfrak{c}]$. Clearly
$U_\alpha$ is open in $Y$ and $U_\alpha\cap U_{\beta}=\emptyset$
if $\alpha\neq\beta$. Choose the sequence $(\mathcal{U}_n)_{n\in\mathbb{N}}$
of open covers of $Y$, where $$\mathcal{U}_n=\{U_\alpha :
\alpha<\mathfrak{c}\}\cup\{\beta D\times[0,\mathfrak{c})\}$$ for
each $n\in\mathbb{N}$. Let $(K_n)_{n\in\mathbb{N}}$ be any sequence of compact subspaces of $Y$. Since $\{(d_\alpha,\mathfrak{c}) : \alpha<\mathfrak{c}\}$ is a closed discrete subset of $Y$,
$K_n\cap\{(d_\alpha,\mathfrak{c}) : \alpha<\mathfrak{c}\}$ is
finite for each $n\in\mathbb{N}$. Thus for each $n\in\mathbb{N}$ there exists a $\alpha_n<\mathfrak{c}$ such that $$K_n\cap\{(d_\alpha,\mathfrak{c}) :
\alpha>\alpha_n\}=\emptyset.$$ Choose $\alpha_0=\sup\{\alpha_n :
n\in\mathbb{N}\}$ and so $\alpha_0<\mathfrak{c}$. It follows
that $$(\cup_{n\in\mathbb{N}}K_n)\cap\{(d_\alpha,\mathfrak{c}) :
\alpha>\alpha_0\}=\emptyset.$$ For each $n\in\mathbb{N}$ let $F_n=\{\alpha :
(d_\alpha,\mathfrak{c})\in K_n\}$. Clearly each $F_n$ is finite
and $K_n\setminus\cup_{\alpha\in F_n}U_\alpha$ is closed in
$K_n$ with $$K_n\setminus\cup_{\alpha\in
F_n}U_\alpha\subseteq\beta D\times[0,\mathfrak{c}).$$
Consequently $\pi(K_n\setminus\cup_{\alpha\in F_n}U_\alpha)$ is
a compact subspace of a countably compact space
$[0,\mathfrak{c})$, where $\pi:\beta
D\times[0,\mathfrak{c})\to[0,\mathfrak{c})$ is the projection.
Subsequently for each $n\in\mathbb{N}$ we get a $\beta_n<\mathfrak{c}$ such
that $$\pi(K_n\setminus\cup_{\alpha\in
F_n}U_\alpha)\cap(\beta_n,\mathfrak{c})=\emptyset.$$ Choose
$\beta_0=\sup\{\beta_n : n\in\mathbb{N}\}$ and observe that
$\beta_0<\mathfrak{c}$. If we pick a
$\gamma>\max\{\alpha_0,\beta_0\}$, then $U_\gamma\cap
K_n=\emptyset$ for all $n\in\mathbb{N}$. Notice that $U_\gamma$ is the only
member of $\mathcal{U}_n$ containing $(d_\gamma,\mathfrak{c})$
for all $n\in\mathbb{N}$. Thus
$$St((d_\gamma,\mathfrak{c}),\mathcal{U}_n)\cap K_n=\emptyset$$
for all $n\in\mathbb{N}$. It now follows that $Y$ is not star-$K$-Scheepers
as $F=\{(d_\gamma,\mathfrak{c})\}$ is a finite subset of $Y$
with $$F\nsubseteq St(K_n,\mathcal{U}_n)$$ for all $n\in\mathbb{N}$.

Let $f:D\times\{\mathfrak{c}^+\}\to D\times\{\mathfrak{c}\}$ be
a bijection and $Z$ be the quotient image of the topological sum
$X\oplus Y$ obtained by identifying $(d_\alpha,\mathfrak{c}^+)$
of $X$ with $f(d_\alpha,\mathfrak{c}^+)$ of $Y$ for every
$\alpha<\mathfrak{c}$. Let $q:X\oplus Y\to Z$ be the quotient
map. Thus $q(Y)$ is a regular-closed subset of $Z$, but $q(Y)$
is not star-$K$-Scheepers as it is homeomorphic to $Y$. Finally
from \cite[Example 2.11]{sKH}, $Z$ is star-$K$-Scheepers.
\end{Ex}

Assuming $\mathfrak{d}\leq\mathfrak{r}$, one can use \cite[Theorem 2.5]{FPM} and \cite[Theorem 3.9]{coc2} to find two Scheepers sets of reals whose union is not Scheepers. Thus in view of Proposition~\ref{T1}, there exist two star-$K$-Scheepers spaces whose union is not star-$K$-Scheepers. Now take $X=\cup_{k\in\mathbb{N}}X_k$, where $X_k\subseteq X_{k+1}$ for all $k\in\mathbb{N}$. If each $X_k$ is star-$K$-Scheepers, then $X$ is also star-$K$-Scheepers. The converse of this assertion is not true.  For example, let $X$ be the Isbell-Mr\'{o}wka space $\Psi(\mathcal{A})$ \cite{Mrowka} with $|\mathcal{A}|=\omega_1$, under the assumption that $\omega_1<\mathfrak{d}$. By \cite[Corollary 3.19(1)]{dcna22-2}, $X$ is strongly star-Scheepers hence $X$ is star-$K$-Scheepers. We can write $X=\cup_{n\in\mathbb{N}}X_n$, where $$X_n=\mathcal{A}\cup\{1,2,\dotsc,n\}$$ for each $n\in\mathbb{N}$. Since each $X_n$ is discrete and $|X_n|=\omega_1$, it follows that $X_n$ is not star-$K$-Scheepers for any $n\in\mathbb{N}$.

Next we turn to consider Alexandroff duplicate $AD(X)$ of a space $X$ \cite{AD,Engelking}, which is defined as follows. $AD(X)=X\times\{0,1\}$; each point of $X\times\{1\}$ is isolated and a basic neighbourhood of $(x,0)\in X\times\{0\}$ is a set of the form $(U\times\{0\})\cup((U\times\{1\})\setminus\{(x,1)\})$, where $U$ is a neighbourhood of $x$ in $X$.

\begin{Th}[{\cite[Theorem 3.32]{dcna22-2}}]
\label{T4}
The following assertions are equivalent.
\begin{enumerate}[wide=0pt,label={\upshape(\arabic*)},leftmargin=*]
  \item $X$ is Scheepers.
  \item $AD(X)$ is Scheepers.
\end{enumerate}
\end{Th}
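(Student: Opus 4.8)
The plan is to prove the two implications separately, the reverse one being routine and the forward one carrying all the content. For $(2)\Rightarrow(1)$ I would rely on the fact that the Scheepers property $\Uf(\mathcal{O},\Omega)$ is preserved by continuous surjections, together with the observation that the projection $p\colon AD(X)\to X$, $p(x,i)=x$, is a continuous surjection; indeed $p^{-1}(U)=U\times\{0,1\}$ is open in $AD(X)$ for every open $U\subseteq X$, since the isolated points of $X\times\{1\}$ and the basic neighbourhoods of points of $X\times\{0\}$ both respect the product structure. Given open covers $(\mathcal{W}_n)$ of $X$, I would pull them back through $p$, apply the Scheepers property of $AD(X)$, and push the resulting finite subfamilies forward; surjectivity of $p$ is exactly what guarantees that the pushed-forward family is again an $\omega$-cover of $X$. (Equivalently, one may note that $X\times\{0\}$ is a continuous retract of $AD(X)$ via $(x,i)\mapsto(x,0)$.)

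For $(1)\Rightarrow(2)$, let $(\mathcal{U}_n)$ be a sequence of open covers of $AD(X)$. For each $n$ and each $x\in X$ I would fix a member $U_n(x)\in\mathcal{U}_n$ containing $(x,0)$ and, inside it, a basic neighbourhood $(G_n(x)\times\{0\})\cup((G_n(x)\times\{1\})\setminus\{(x,1)\})$, where $G_n(x)$ is open in $X$ and $x\in G_n(x)$. Then $\mathcal{G}_n=\{G_n(x):x\in X\}$ is an open cover of $X$ for each $n$. Applying the Scheepers property of $X$ to the sequence $(\mathcal{G}_n)$ produces finite sets $E_n\subseteq X$ such that $\{\bigcup_{x\in E_n}G_n(x):n\in\mathbb{N}\}$ is an $\omega$-cover of $X$ (or $\bigcup_{x\in E_n}G_n(x)=X$ for some $n$). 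As the candidate finite subfamily of $\mathcal{U}_n$ I would take $\mathcal{V}_n=\{U_n(x):x\in E_n\}\cup\{U_n'(x):x\in E_n\}$, where each $U_n'(x)\in\mathcal{U}_n$ is chosen to contain the isolated point $(x,1)$.

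To check that $\{\bigcup\mathcal{V}_n:n\in\mathbb{N}\}$ is an $\omega$-cover of $AD(X)$, I would take an arbitrary finite $F\subseteq AD(X)$, split it into $F_0\subseteq X\times\{0\}$ and $F_1\subseteq X\times\{1\}$, and let $F_X\subseteq X$ be the finite set of first coordinates of the points of $F$. Choosing $n$ with $F_X\subseteq\bigcup_{x\in E_n}G_n(x)$, every point $(z,0)\in F_0$ lies in some $G_n(x')\times\{0\}\subseteq U_n(x')$, while every isolated point $(z,1)\in F_1$ lies in some $G_n(x')$ with $x'\in E_n$: if $x'\neq z$ it is caught by $U_n(x')$, and if $x'=z$ (so $z\in E_n$) it is caught by $U_n'(z)$. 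Hence $F\subseteq\bigcup\mathcal{V}_n$, as required.

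The main obstacle, and the only place the duplicate structure genuinely intervenes, is precisely this last point: the basic neighbourhood of $(x,0)$ deliberately omits the isolated point $(x,1)$, so the centers $x\in E_n$ are exactly the isolated points that the naive choice $\{U_n(x):x\in E_n\}$ might fail to cover. Adjoining the finitely many sets $U_n'(x)$ repairs this while keeping each $\mathcal{V}_n$ finite, which is what makes the argument go through. The escape clause of $\Uf(\mathcal{O},\Omega)$ is then handled for free: if $\bigcup_{x\in E_n}G_n(x)=X$ for some $n$, then $\mathcal{V}_n$ already covers all of $X\times\{0\}$ and every isolated point outside $\{(x,1):x\in E_n\}$, and the sets $U_n'(x)$ cover those remaining points, giving $\bigcup\mathcal{V}_n=AD(X)$.
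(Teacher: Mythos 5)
Your proof is correct: the projection argument for $(2)\Rightarrow(1)$ and the device of adjoining the extra sets $U_n'(x)$ for $x\in E_n$ to catch the isolated points $(x,1)$ excluded from the basic neighbourhoods of $(x,0)$ are exactly what is needed, and you handle the escape clause of $\Uf(\mathcal{O},\Omega)$ properly. The paper itself gives no proof here --- it only cites \cite[Theorem 3.32]{dcna22-2} --- but your argument is the standard one for Alexandroff duplicates and is what that reference carries out.
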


\begin{Cor}
\label{C5}
If $X$ is Scheepers, then $AD(X)$ is star-$K$-Scheepers.
\end{Cor}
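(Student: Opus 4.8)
The plan is to chain Theorem~\ref{T4} with the implication diagram in Figure~\ref{dig1}, so that the statement becomes essentially immediate. First I would apply Theorem~\ref{T4}: since $X$ is Scheepers, $AD(X)$ is Scheepers as well. This reduces the corollary to the observation that every Scheepers space is star-$K$-Scheepers, i.e. that $\Uf(\mathcal{O},\Omega)$ implies $\SSSC(\mathcal{O},\Omega)$, which is precisely the composite of the two vertical implications in the middle column of Figure~\ref{dig1}.

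For completeness I would indicate why that composite holds. Given a sequence $(\mathcal{U}_n)$ of open covers, apply the Scheepers property to obtain finite sets $\mathcal{V}_n\subseteq\mathcal{U}_n$ with $\{\cup\mathcal{V}_n : n\in\mathbb{N}\}$ an $\omega$-cover. Choosing one point from each member of $\mathcal{V}_n$ produces a finite set $K_n$; since every $V\in\mathcal{V}_n$ then meets $K_n$, we have $\cup\mathcal{V}_n\subseteq St(K_n,\mathcal{U}_n)$, so $\{St(K_n,\mathcal{U}_n) : n\in\mathbb{N}\}$ is again an $\omega$-cover. This witnesses $\SSSf(\mathcal{O},\Omega)$ (the strongly star-Scheepers property), and since each finite $K_n$ is compact, the very same sequence witnesses $\SSSC(\mathcal{O},\Omega)$, the star-$K$-Scheepers property.

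Applying this implication to $AD(X)$ finishes the argument. I do not anticipate any genuine obstacle: the proof is purely a concatenation of Theorem~\ref{T4} with implications already recorded in Figure~\ref{dig1}, and the only point requiring verification---that a Scheepers selection can be upgraded to a compact (indeed finite) star selection by point-picking---is entirely routine.
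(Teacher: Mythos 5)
Your proposal is correct and follows exactly the route the paper intends: Theorem~\ref{T4} gives that $AD(X)$ is Scheepers, and the vertical implications $\Uf(\mathcal{O},\Omega)\Rightarrow\SSSf(\mathcal{O},\Omega)\Rightarrow\SSSC(\mathcal{O},\Omega)$ from Figure~\ref{dig1} (which you correctly justify by point-picking, noting also that finite sets are compact) yield the conclusion. The only detail you elide---the ``or $\cup\mathcal{V}_n=X$'' clause in $\Uf(\mathcal{O},\Omega)$---is harmless, since in that case $St(K_n,\mathcal{U}_n)=X$ and the resulting family is trivially an $\omega$-cover.
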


\begin{Ex}
\label{E9}
\emph{There exists a Tychonoff star-$K$-Scheepers space $X$ such that $AD(X)$ is not star-$K$-Scheepers.}\\
Let $X=Y(\kappa)$ be the space as in Example~\ref{E5} with
$\kappa>\omega$. The space $X$ is Tychonoff star-$K$-Scheepers. But $AD(X)$ is not star-$K$-Scheepers. Indeed, since $D\times\{\kappa^+\}$ is a closed discrete subset of $X$, $$(D\times\{\kappa^+\})\times\{1\}$$ is a clopen discrete subset of $AD(X)$ which is not star-$K$-Scheepers, hence $AD(X)$ has not star-$K$-Scheepers property since it should be preserved under clopen subsets.
\end{Ex}

It is natural to ask the following.
\begin{Prob}
\label{Q2}
Does there exist a space $X$ such that $AD(X)$ is
star-$K$-Scheepers, but $X$ is not star-$K$-Scheepers?
\end{Prob}

The following result indicates that the answer to the above
problem is not affirmative.
\begin{Th}
\label{T3}
If $AD(X)$ is star-$K$-Scheepers, then $X$ is also
star-$K$-Scheepers.
\end{Th}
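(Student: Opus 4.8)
The plan is to realize $X$ as a continuous surjective image of $AD(X)$ and then invoke the fact, recorded earlier in this section, that the star-$K$-Scheepers property is an invariant of continuous mappings. Consider the projection $\pi\colon AD(X)\to X$ defined by $\pi(x,i)=x$ for $i\in\{0,1\}$. This map is clearly onto, and it is continuous: for an open set $U\subseteq X$ one has $\pi^{-1}(U)=U\times\{0,1\}$, which is open in $AD(X)$ because every point $(x,1)$ with $x\in U$ is isolated, while every point $(x,0)$ with $x\in U$ has the basic neighbourhood $(U\times\{0\})\cup((U\times\{1\})\setminus\{(x,1)\})$ contained in $U\times\{0,1\}$. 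Since $AD(X)$ is star-$K$-Scheepers and this property is preserved by continuous surjections, it follows at once that $X$ is star-$K$-Scheepers. Note that one cannot simply appeal to $X\times\{0\}$ being a (closed) copy of $X$ inside $AD(X)$, since Example~\ref{E8} shows the property is not inherited even by regular-closed subsets; the continuous-image route is what makes the argument work.

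If a self-contained argument is preferred, the same idea can be carried out directly. First I would pick an arbitrary sequence $(\mathcal{U}_n)$ of open covers of $X$ and pull it back to the sequence $(\pi^{-1}(\mathcal{U}_n))$ of open covers of $AD(X)$, where $\pi^{-1}(\mathcal{U}_n)=\{\pi^{-1}(U):U\in\mathcal{U}_n\}$. Applying the star-$K$-Scheepers property of $AD(X)$ to this sequence produces a sequence $(K_n)$ of compact subsets of $AD(X)$ such that $\{St(K_n,\pi^{-1}(\mathcal{U}_n)):n\in\mathbb{N}\}$ is an $\omega$-cover of $AD(X)$. I would then set $L_n=\pi(K_n)$, which is a compact subset of $X$ by continuity, and propose $(L_n)$ as the witnessing sequence for $(\mathcal{U}_n)$.

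The heart of the verification is the star-compatibility containment $\pi(St(K_n,\pi^{-1}(\mathcal{U}_n)))\subseteq St(L_n,\mathcal{U}_n)$: if $y\in St(K_n,\pi^{-1}(\mathcal{U}_n))$, then $y\in\pi^{-1}(U)$ for some $U\in\mathcal{U}_n$ meeting $K_n$, whence $\pi(y)\in U$ and $U\cap L_n\neq\emptyset$, so $\pi(y)\in St(L_n,\mathcal{U}_n)$. Combining this with the surjectivity of $\pi$, the $\omega$-cover property transfers: given a finite $F\subseteq X$, choose a finite $G\subseteq AD(X)$ with $\pi(G)=F$, find $n$ with $G\subseteq St(K_n,\pi^{-1}(\mathcal{U}_n))$, and conclude $F\subseteq St(L_n,\mathcal{U}_n)$. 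Hence $\{St(L_n,\mathcal{U}_n):n\in\mathbb{N}\}$ is an $\omega$-cover of $X$, as required. The only points needing care are this star-compatibility lemma and the use of surjectivity to lift finite subsets of $X$ to $AD(X)$; everything else is routine, and in fact the whole statement becomes immediate once the continuous-image observation is made.
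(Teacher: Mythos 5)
Your proposal is correct. Your fallback ``self-contained'' argument is essentially the paper's own proof: the paper pulls back $(\mathcal{U}_n)$ to the covers $\mathcal{V}_n=\{U\times\{0,1\}:U\in\mathcal{U}_n\}$ (which are exactly your $\pi^{-1}(\mathcal{U}_n)$), obtains compact sets $K_n$ in $AD(X)$, and takes $C_n=\{x\in X:(x,0)\in K_n\text{ or }(x,1)\in K_n\}$ --- which is precisely your $L_n=\pi(K_n)$ --- leaving the star-compatibility and $\omega$-cover transfer to the reader, which you spell out correctly. Your primary argument, however, is a genuinely shorter route that the paper does not take: since the projection $\pi\colon AD(X)\to X$ is a continuous surjection, the theorem follows immediately from the paper's earlier (unproved but true) observation that the star-$K$-Scheepers property is an invariant of continuous mappings. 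What that shortcut buys is brevity and a clear conceptual reason for the result; what it costs is that it leans on a blanket invariance claim, so the direct verification (yours and the paper's) is the more self-contained option. Your remark that one cannot instead argue via the closed copy $X\times\{0\}$, because the property is not closed-hereditary (cf.\ Example~\ref{E8}), is an apt caution.
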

\begin{proof}
Consider a sequence $(\mathcal{U}_n)_{n\in\mathbb{N}}$ of open covers of $X$. We
define a sequence $(\mathcal{V}_n)_{n\in\mathbb{N}}$ of open covers of $AD(X)$, where for each $n\in\mathbb{N}$, $$\mathcal{V}_n=\{U\times\{0,1\} :
U\in\mathcal{U}_n\}.$$ Since $AD(X)$ is star-$K$-Scheepers,
there exists a sequence  $(K_n)_{n\in\mathbb{N}}$ of compact subspaces of $AD(X)$ such that $\{St(K_n,\mathcal{V}_n) : n\in\mathbb{N}\}$ is an
$\omega$-cover of $AD(X)$. For each $n\in\mathbb{N}$ let $$C_n=\{x\in X :
\text{either $(x,0)\in K_n$ or $(x,1)\in K_n$}\}.$$ Observe that
for each $n\in\mathbb{N}$, $C_n$ is a compact subspace of $X$. The sequence
$(C_n)_{n\in\mathbb{N}}$ witnesses that the space is star-$K$-Scheepers.
\end{proof}

The extent of a Lindel\"{o}f (and hence Scheepers) space must be
countable. But in case of star-$K$-Scheepers spaces the extent
can be arbitrarily large (see Examples~\ref{E6} and \ref{E5}). However we obtain the following.
\begin{Th}
\label{T2}
If $X$ is a $T_1$ space such that $AD(X)$ is star-Lindel\"{o}f,
then $e(X)\leq\omega$.
\end{Th}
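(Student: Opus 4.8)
The plan is to prove the contrapositive: if $e(X) > \omega$, then $AD(X)$ is not star-Lindel\"of. So suppose $X$ contains an uncountable closed discrete subspace $E = \{x_\alpha : \alpha < \omega_1\}$. Since $X$ is $T_1$ and $E$ is discrete-in-itself and closed, for each $\alpha$ there is an open set $W_\alpha$ in $X$ with $W_\alpha \cap E = \{x_\alpha\}$. The key feature of the Alexandroff duplicate is that each point $(x_\alpha, 1)$ is isolated, and any neighbourhood of $(x_\alpha, 0)$ in $AD(X)$ omits $(x_\alpha,1)$ while containing all nearby duplicate points. The plan is to build an open cover of $AD(X)$ that \emph{separates} the uncountably many isolated points $(x_\alpha, 1)$ so thoroughly that no single countable set can have a star covering all of them.

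\textbf{Constructing the cover.} First I would cover the "bottom copy" $X \times \{0\}$ together with most of the duplicates: take $\mathcal{U} = \{X \times \{0,1\}\}$ as a base open set, or more carefully the basic duplicate neighbourhoods $(W_\alpha \times \{0\}) \cup ((W_\alpha \times \{1\}) \setminus \{(x_\alpha,1)\})$ to handle the points of $X \times \{0\}$. Then, crucially, I would add for each $\alpha < \omega_1$ the singleton $\{(x_\alpha, 1)\}$, which is open since $(x_\alpha,1)$ is isolated. Call the resulting open cover $\mathcal{V}$. The design point is that $\{(x_\alpha,1)\}$ is the \emph{only} member of $\mathcal{V}$ containing the isolated point $(x_\alpha, 1)$: the bottom-copy neighbourhoods were arranged to exclude exactly these duplicate points.

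\textbf{Defeating star-Lindel\"ofness.} Now suppose toward a contradiction that $AD(X)$ is star-Lindel\"of. Applied to $\mathcal{V}$, this yields a countable $\mathcal{W} \subseteq \mathcal{V}$ with $St(\cup\mathcal{W}, \mathcal{V}) = AD(X)$. Since $\mathcal{W}$ is countable, it contains only countably many of the singletons $\{(x_\alpha,1)\}$; pick $\gamma < \omega_1$ with $\{(x_\gamma,1)\} \notin \mathcal{W}$. Because $\{(x_\gamma,1)\}$ is the unique member of $\mathcal{V}$ meeting $(x_\gamma,1)$, the point $(x_\gamma,1)$ lies in a member of $\mathcal{V}$ that intersects $\cup\mathcal{W}$ only if that member is $\{(x_\gamma,1)\}$ itself and $\{(x_\gamma,1)\}$ meets $\cup\mathcal{W}$; but $(x_\gamma,1)$ belongs to no member of $\mathcal{W}$, so $(x_\gamma,1) \notin St(\cup\mathcal{W}, \mathcal{V})$, a contradiction. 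Hence $AD(X)$ is not star-Lindel\"of, completing the contrapositive.

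\textbf{The main obstacle} I anticipate is the bookkeeping in the first step: ensuring the neighbourhoods covering $X \times \{0\}$ genuinely avoid \emph{all} the distinguished duplicate points $(x_\alpha,1)$ simultaneously, so that each such point is isolated \emph{within the cover} $\mathcal{V}$ and captured by its singleton alone. The $T_1$ hypothesis is what lets me shrink the $W_\alpha$ to isolate $x_\alpha$ in $E$, and the discreteness of $E$ in $X$ is what guarantees the cover is well-defined; I would need to verify that the basic neighbourhood of each $(x,0)$ can be chosen to exclude the countably many or uncountably many relevant duplicates without leaving any point of $X \times \{0\}$ uncovered, which is routine once the isolating sets $W_\alpha$ are fixed.
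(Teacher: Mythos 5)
Your argument is correct and is essentially the paper's proof unpacked: the paper observes that the uncountable closed discrete set $D$ gives a clopen uncountable discrete subspace $D\times\{1\}$ of $AD(X)$ and invokes the clopen-heredity of the star-Lindel\"{o}f property together with the fact that an uncountable discrete space is not star-Lindel\"{o}f, which is exactly the singleton-cover obstruction you construct by hand. The bookkeeping you flag is indeed routine: for every $x\in X$ choose $U_x$ with $U_x\cap E\subseteq\{x\}$ (using closedness and relative discreteness of $E$), take the corresponding basic duplicate neighbourhoods to cover $X\times\{0\}$ while avoiding all of $E\times\{1\}$, and add the singleton $\{(y,1)\}$ for every $y\in X$ so that the top copy is covered.
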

\begin{proof}
Suppose by contradiction that $e(X)\geq\omega_1$. It follows that
there exists a discrete closed subset $D$ of $X$ such that
$|D|\geq\omega_1$. It is easy to see that $D\times\{1\}$ is a
clopen subset of $AD(X)$ and since each point of $D\times\{1\}$
is isolated, it is also discrete. Thus $AD(X)$ is not
star-Lindel\"{o}f as the star-Lindel\"{o}f property is preserved
under clopen subsets, which is a contradiction. Consequently
$e(X)\leq\omega$.
\end{proof}

We obtain the following result of Song as a consequence of the
preceding theorem.
\begin{Cor}[{\!\cite[Theorem 2.6]{rsM}}]
\label{C1}
If $X$ is a $T_1$ space such that $AD(X)$ is star-Menger, then
$e(X)\leq\omega$.
\end{Cor}

\begin{Cor}[{\!\cite[Theorem 2.13]{RSKM}}]
\label{C3}
If $X$ is a $T_1$ space such that $AD(X)$ is star-$K$-Menger,
then $e(X)\leq\omega$.
\end{Cor}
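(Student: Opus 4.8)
The plan is to deduce this corollary from Theorem~\ref{T2} in exactly the manner used for Corollary~\ref{C1}, the only additional ingredient being that the star-$K$-Menger property implies the star-Lindel\"of property. Granting this implication, if $AD(X)$ is star-$K$-Menger then $AD(X)$ is star-Lindel\"of, and since $X$ is $T_1$, Theorem~\ref{T2} immediately yields $e(X)\leq\omega$. Thus the whole task reduces to establishing the auxiliary fact that every star-$K$-Menger space is star-Lindel\"of.

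To prove that auxiliary fact for a space $Z$, I would fix an arbitrary open cover $\mathcal{U}$ of $Z$ and apply the selection hypothesis $\SSSC(\mathcal{O},\mathcal{O})$ to the constant sequence $\mathcal{U}_n=\mathcal{U}$, $n\in\mathbb{N}$. This produces a sequence $(K_n)$ of compact subsets of $Z$ such that $\{St(K_n,\mathcal{U}) : n\in\mathbb{N}\}$ is an open cover of $Z$. Next, since each $K_n$ is compact, I would cover it by a finite subfamily $\mathcal{V}_n\subseteq\mathcal{U}$, so that $K_n\subseteq\cup\mathcal{V}_n$, and set $\mathcal{V}=\cup_{n\in\mathbb{N}}\mathcal{V}_n$, a countable subfamily of $\mathcal{U}$. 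Monotonicity of the star operation then gives $St(K_n,\mathcal{U})\subseteq St(\cup\mathcal{V}_n,\mathcal{U})$ for every $n$, whence
\[
Z=\cup_{n\in\mathbb{N}}St(K_n,\mathcal{U})\subseteq\cup_{n\in\mathbb{N}}St(\cup\mathcal{V}_n,\mathcal{U})\subseteq St(\cup\mathcal{V},\mathcal{U}).
\]
Therefore $St(\cup\mathcal{V},\mathcal{U})=Z$, and since $\mathcal{U}$ was arbitrary, $Z$ is star-Lindel\"of.

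There is essentially no serious obstacle here; the argument is routine once the right constant sequence is plugged into the selection hypothesis. The only point requiring a moment's care is the containment $St(K_n,\mathcal{U})\subseteq St(\cup\mathcal{V}_n,\mathcal{U})$, which rests solely on $K_n\subseteq\cup\mathcal{V}_n$ together with the observation that enlarging the base set can only enlarge its star. Applying the displayed chain with $Z=AD(X)$ and then invoking Theorem~\ref{T2} completes the proof.
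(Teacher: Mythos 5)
Your proposal is correct and follows exactly the route the paper intends: the corollary is stated as a consequence of Theorem~\ref{T2}, via the implication that every star-$K$-Menger space is star-Lindel\"of (which the paper leaves implicit and you verify correctly by feeding a constant sequence into $\SSSC(\mathcal{O},\mathcal{O})$ and using compactness of the $K_n$ plus monotonicity of the star operator). No gaps.
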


\begin{Cor}
\label{C2}
If $X$ is a $T_1$ space such that $AD(X)$ is star-$K$-Scheepers,
then $e(X)\leq\omega$.
\end{Cor}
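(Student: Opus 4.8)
The plan is to reduce this corollary to Theorem~\ref{T2} by first establishing a simple auxiliary fact: \emph{every star-$K$-Scheepers space is star-Lindel\"of}. Once this is in hand, since $AD(X)$ is assumed to be star-$K$-Scheepers it will be star-Lindel\"of, and because $X$ is $T_1$, Theorem~\ref{T2} immediately yields $e(X)\leq\omega$.

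To prove the auxiliary implication, I would start with an arbitrary open cover $\mathcal{U}$ of a star-$K$-Scheepers space $Z$ and apply the selection hypothesis $\SSSC(\mathcal{O},\Omega)$ to the constant sequence $(\mathcal{U}_n)$ given by $\mathcal{U}_n=\mathcal{U}$ for every $n$. This produces a sequence $(K_n)$ of compact subsets of $Z$ such that $\{St(K_n,\mathcal{U}) : n\in\mathbb{N}\}$ is an $\omega$-cover, and hence in particular a cover, of $Z$. For each $n$ the compactness of $K_n$ lets me extract a finite subfamily $\mathcal{V}_n\subseteq\mathcal{U}$ with $K_n\subseteq\cup\mathcal{V}_n$; setting $\mathcal{V}=\bigcup_{n}\mathcal{V}_n$ gives a countable subfamily of $\mathcal{U}$.

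The one step requiring a moment's care is the inclusion $St(K_n,\mathcal{U})\subseteq St(\cup\mathcal{V},\mathcal{U})$: if some $U\in\mathcal{U}$ meets $K_n$ at a point $x$, then $x\in\cup\mathcal{V}_n\subseteq\cup\mathcal{V}$, so $U$ meets $\cup\mathcal{V}$ and therefore $U\subseteq St(\cup\mathcal{V},\mathcal{U})$. Consequently $St(\cup\mathcal{V},\mathcal{U})\supseteq\bigcup_{n}St(K_n,\mathcal{U})=Z$, so $Z$ is star-Lindel\"of as claimed. There is no genuine obstacle here; the argument parallels the reasoning behind Corollaries~\ref{C1} and \ref{C3}, the only subtleties being the passage from an $\omega$-cover to an ordinary cover and the star inclusion above. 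Applying this observation to the space $AD(X)$ and then quoting Theorem~\ref{T2} completes the proof.
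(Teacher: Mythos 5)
Your proof is correct and follows essentially the same route as the paper, which presents Corollary~\ref{C2} (like Corollaries~\ref{C1} and~\ref{C3}) as a direct consequence of Theorem~\ref{T2} via the standard implication that the star-$K$-Scheepers property implies star-Lindel\"ofness. Your explicit verification of that implication, including the star inclusion $St(K_n,\mathcal{U})\subseteq St(\cup\mathcal{V},\mathcal{U})$, is accurate and fills in exactly the step the paper leaves implicit.
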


In view of the above corollary, we ask the following question.
\begin{Prob}
\label{Pb1}
Is the space $AD(X)$ of a star-$K$-Scheepers space $X$ with
$e(X)\leq\omega$ also star-$K$-Scheepers?
\end{Prob}

Now observe that preimage of a Tychonoff star-$K$-Scheepers
space under a closed 2-to-1 continuous mapping may not be
star-$K$-Scheepers. If $Y=Y(\kappa)$ is the space as in
Example~\ref{E5} with $\kappa>\omega$, then $Y$ is Tychonoff
star-$K$-Scheepers. By Example~\ref{E9}, $X=AD(Y)$ is not
star-$K$-Scheepers. Besides, the projection $p:X\to Y$ is a
closed 2-to-1 continuous mapping. However we obtain the
following.
\begin{Th}
\label{T11}
If $f:X\to Y$ is an open perfect mapping from a space $X$ onto a
star-$K$-Scheepers space $Y$, then $X$ is star-$K$-Scheepers.
\end{Th}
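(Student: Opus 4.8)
The plan is to push the given covers of $X$ down to carefully chosen open covers of $Y$, apply the star-$K$-Scheepers property of $Y$, and then pull the resulting compact sets back along $f$. The two hypotheses on $f$ play complementary roles: perfectness (closed with compact fibres) guarantees that $f^{-1}(L)$ is compact for every compact $L\subseteq Y$ and lets us shrink neighbourhoods in the base, while openness is precisely what makes the star-condition survive the transfer.

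First I would fix a sequence $(\mathcal{U}_n)$ of open covers of $X$. For each $n$ and each $y\in Y$ the fibre $f^{-1}(y)$ is compact, so some finite $\mathcal{U}_n^y\subseteq\mathcal{U}_n$ covers it, and after discarding superfluous members we may assume every $U\in\mathcal{U}_n^y$ meets $f^{-1}(y)$. The crucial step is to build the right neighbourhood of $y$ in $Y$: I would set
$$V_n^y=\Big(\bigcap_{U\in\mathcal{U}_n^y}f(U)\Big)\cap\Big(Y\setminus f\big(X\setminus\textstyle\bigcup\mathcal{U}_n^y\big)\Big).$$
Here $\bigcap_{U\in\mathcal{U}_n^y}f(U)$ is open because $f$ is open and the family is finite, and it contains $y$ since each $U$ meets $f^{-1}(y)$; the second factor is open because $f$ is closed, and it contains $y$ because $\mathcal{U}_n^y$ covers $f^{-1}(y)$. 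Thus $V_n^y$ is an open neighbourhood of $y$, so $\mathcal{V}_n=\{V_n^y:y\in Y\}$ is an open cover of $Y$. By construction it enjoys two properties: (a) $f^{-1}(V_n^y)\subseteq\bigcup\mathcal{U}_n^y$ (the complement factor forbids preimages from escaping the union), and (b) $V_n^y\subseteq f(U)$ for \emph{every} $U\in\mathcal{U}_n^y$.

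Next I would apply $\SSSC(\mathcal{O},\Omega)$ in $Y$ to $(\mathcal{V}_n)$ to obtain compact sets $L_n\subseteq Y$ with $\{St(L_n,\mathcal{V}_n):n\in\mathbb{N}\}$ an $\omega$-cover of $Y$, and set $K_n=f^{-1}(L_n)$, which is compact by perfectness. To finish I must check that $\{St(K_n,\mathcal{U}_n):n\in\mathbb{N}\}$ is an $\omega$-cover of $X$. Given a finite $F\subseteq X$, the finite set $f(F)$ lies in some $St(L_n,\mathcal{V}_n)$; for each $x\in F$ pick $y$ with $f(x)\in V_n^y$ and $V_n^y\cap L_n\neq\emptyset$. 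By (a), $x\in\bigcup\mathcal{U}_n^y$, so $x\in U$ for some $U\in\mathcal{U}_n^y$.

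The \textbf{main obstacle} — and the reason a naive image-or-preimage argument fails — is the alignment problem: the member $U$ that happens to contain $x$ must itself meet $K_n$, not merely some other member of $\mathcal{U}_n^y$. Property (b) is exactly what resolves this: choosing $z\in V_n^y\cap L_n$, property (b) gives $z\in f(U)$ for this very $U$, so some point of $U$ maps into $L_n$ and hence $U\cap K_n\neq\emptyset$. Therefore $x\in U\subseteq St(K_n,\mathcal{U}_n)$, and since $x\in F$ was arbitrary, $F\subseteq St(K_n,\mathcal{U}_n)$. This establishes the $\omega$-cover condition, so $X$ is star-$K$-Scheepers.
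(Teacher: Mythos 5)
Your proposal is correct and follows essentially the same route as the paper: push the covers down via neighbourhoods $V_n^y$ obtained by combining the closedness of $f$ (to force $f^{-1}(V_n^y)\subseteq\bigcup\mathcal{U}_n^y$) with its openness (to force $V_n^y\subseteq f(U)$ for each $U$ in the finite fibre cover), apply the star-$K$-Scheepers property of $Y$, and pull the compact sets back by perfectness. The only difference is cosmetic: you spell out the containment $f^{-1}(St(L_n,\mathcal{V}_n))\subseteq St(f^{-1}(L_n),\mathcal{U}_n)$ that the paper dismisses as "an easy verification."
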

\begin{proof}
Let $(\mathcal{U}_n)_{n\in\mathbb{N}}$ be a sequence of open covers of $X$ and
$y\in Y$. Since $f^{-1}(y)$ is compact, for each $n\in\mathbb{N}$ there
exists a finite subset $\mathcal{V}_n^y$ of $\mathcal{U}_n$ such
that $f^{-1}(y)\subseteq\cup\mathcal{V}_n^y$ and $f^{-1}(y)\cap
U\neq\emptyset$ for each $U\in\mathcal{V}_n^y$. Since $f$ is
closed, there exists an open set $U_y^{(n)}$ in $Y$ containing
$y$ such that $f^{-1}(U_y^{(n)})\subseteq\cup\mathcal{V}_n^y$.
Also by the openness of $f$, we can find an open set $V_y^{(n)}$
in $Y$ containing $y$ such that $$V_y^{(n)}\subseteq\cap\{f(U) :
U\in\mathcal{V}_n^y\}$$ and $f^{-1}(V_y^{(n)})\subseteq
f^{-1}(U_y^{(n)})$. Thus we obtain a sequence $(\mathcal{V}_n)_{n\in\mathbb{N}}$
of open covers of $Y$, where for each $n\in\mathbb{N}$, $\mathcal{V}_n=\{V_y^{(n)} : y\in Y\}$. Apply the
star-$K$-Scheepers property of $Y$ to $(\mathcal{V}_n)_{n\in\mathbb{N}}$ to
obtain a sequence $(K_n^\prime)_{n\in\mathbb{N}}$ of compact subspaces of $Y$ such that $\{St(K_n^\prime,\mathcal{V}_n) : n\in\mathbb{N}\}$ is an
$\omega$-cover of $Y$. For each $n\in\mathbb{N}$ choose $K_n=f^{-1}(K_n^\prime)$. Clearly $(K_n)_{n\in\mathbb{N}}$ is a sequence of compact subspaces of $X$ since $f$ is a perfect mapping. We now show that $\{St(K_n,\mathcal{U}_n) : n\in\mathbb{N}\}$ is an $\omega$-cover of $X$. Let $F$ be a finite subset of $X$. There exists a $n_0\in\mathbb{N}$ such that $f(F)\subseteq St(K_{n_0}^\prime,\mathcal{V}_{n_0})$. An easy verification gives $$f^{-1}(St(K_{n_0}^\prime,\mathcal{V}_{n_0}))\subseteq
St(K_{n_0},\mathcal{U}_{n_0}).$$ It follows that $F\subseteq
St(K_{n_0},\mathcal{U}_{n_0})$ and consequently $\{St(K_n,\mathcal{U}_n) : n\in\mathbb{N}\}$ is an $\omega$-cover of $X$. This completes the proof.
\end{proof}

Since the star-$K$-Scheepers property is preserved under
countable increasing unions, we obtain the following.
\begin{Cor}
\label{C4}
If $X$ is a star-$K$-Scheepers space and $Y$ is a
$\sigma$-compact space, then $X\times Y$ is star-$K$-Scheepers.
\end{Cor}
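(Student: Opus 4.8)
The plan is to realise $X \times Y$ as a countable increasing union of products of $X$ with compact spaces, and then to invoke the two tools already at our disposal: the preservation of the star-$K$-Scheepers property under open perfect preimages (Theorem~\ref{T11}) and its preservation under countable increasing unions.

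First I would use the $\sigma$-compactness of $Y$ to write $Y=\cup_{k\in\mathbb{N}}Y_k$, where each $Y_k$ is compact and $Y_k\subseteq Y_{k+1}$ for all $k$ (if $Y=\cup_k C_k$ with $C_k$ compact, replacing $C_k$ by $\cup_{i\leq k}C_i$ makes the sequence increasing while keeping each term compact). This yields
$$X\times Y=\cup_{k\in\mathbb{N}}(X\times Y_k)$$
with $X\times Y_k\subseteq X\times Y_{k+1}$, so that $X\times Y$ is a countable increasing union of the subspaces $X\times Y_k$.

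Next I would show that each $X\times Y_k$ is star-$K$-Scheepers. Consider the projection $p_k:X\times Y_k\to X$. Being a coordinate projection it is open, and since $Y_k$ is compact, $p_k$ is closed with compact fibres $\{x\}\times Y_k$; hence $p_k$ is an open perfect mapping onto the star-$K$-Scheepers space $X$. Theorem~\ref{T11} then gives at once that $X\times Y_k$ is star-$K$-Scheepers. (If some $Y_k$ is empty, the corresponding factor is the empty space, which is trivially star-$K$-Scheepers, so there is nothing to check there.)

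Finally, since the star-$K$-Scheepers property is preserved under countable increasing unions, the representation of $X\times Y$ from the first step forces $X\times Y$ to be star-$K$-Scheepers, completing the argument. The only point that requires genuine care is the verification that each $p_k$ is perfect, that is, that projection along the compact factor $Y_k$ is a closed map; this is the classical consequence of compactness (via the tube lemma) and is precisely where the hypothesis on $Y$ enters. Everything else is bookkeeping, so I expect no further obstacle.
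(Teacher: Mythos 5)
Your argument is correct and is exactly the one the paper intends: the corollary is stated as an immediate consequence of Theorem~\ref{T11} together with the preservation of the star-$K$-Scheepers property under countable increasing unions, via the decomposition $X\times Y=\cup_k(X\times Y_k)$ with each projection $X\times Y_k\to X$ open and perfect. No differences to report.
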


In the next example we show that product of two
star-$K$-Scheepers spaces may not be star-$K$-Scheepers.
\begin{Ex}
\label{E15}
\emph{There exist two countably compact spaces such that their
product is not star-Lindel\"{o}f (hence not
star-$K$-Scheepers).}\\
In \cite{Song}, the following construction is considered and in \cite{SX}, it is proved that the space $X\times Y$ is not star-Lindel\"{o}f.
Let $D$ be the discrete space with cardinality $\mathfrak{c}$.
Let $X=\cup_{\alpha<\omega_1}E_\alpha$ and
$Y=\cup_{\alpha<\omega_1}F_\alpha$, where $E_\alpha$ and
$F_\alpha$ are subsets of $\beta D$ such that
\begin{enumerate}[wide=0pt,label={\upshape(\arabic*)},leftmargin=*]
  \item $E_\alpha\cap F_\beta=D$ if $\alpha\neq\beta$;
  \item $|E_\alpha|\leq\mathfrak{c}$ and
      $|F_\beta|\leq\mathfrak{c}$;
  \item every infinite subset of $E_\alpha$ (resp. $F_\alpha$)
      has an accumulation point in $E_{\alpha+1}$ (resp.
      $F_{\alpha+1}$).
\end{enumerate}

Thus both $X$ and $Y$ are countably compact but $X\times Y$ is not star-$K$-Scheepers.
\end{Ex}

Next we observe that Corollary~\ref{C4} does not hold if $X$ is
star-$K$-Scheepers and $Y$ is Lindel\"{o}f.
\begin{Ex}
\label{E17}
\emph{There exist a countably compact space $X$ and a
Lindel\"{o}f space $Y$ such that $X\times Y$ is not
star-Lindel\"{o}f (hence not star-$K$-Scheepers).}\\
Let $X=[0,\omega_1)$ be the space with the usual order topology.
Also define a topology on $Y=[0,\omega_1]$ as follows. Each
point $\alpha<\omega_1$ is isolated and a set $U$ containing
$\omega_1$ is open if and only if $Y\setminus U$ is countable.
Clearly $X$ is countably compact and $Y$ is Lindel\"{o}f. It is proved in \cite[Example 3.8]{Singh} that $X\times Y$ is not star-Lindel\"{o}f.
\end{Ex}

\section{Open Problems}
Consider an almost disjoint family $\mathcal{A}\subseteq P(\mathbb{N})$ with $|\mathcal{A}|=\mathfrak{b}$. Under the hypothesis $\mathfrak{b}=\aleph_1<\mathfrak{d}$, $\Psi(\mathcal{A})$ is not star-Hurewicz by \cite[Theorem 2.4]{CASC} (hence not star-$K$-Hurewicz) but strongly star-Scheepers by \cite[Corollary 3.19]{dcna22-2} (hence star-$K$-Scheepers). Thus a star-$K$-Scheepers space is not necessarily either Scheepers or star-$K$-Hurewicz.

In addition to Problem~\ref{P6}, the following questions seem to be interesting.
\begin{Prob}
\label{Pb3}
Is there a star-$K$-Scheepers space which is neither strongly star-Scheepers nor star-$K$-Hurewicz?
\end{Prob}

\begin{Prob}
\label{Pb4}
Is there a star-$K$-Menger space which is neither Menger nor star-$K$-Scheepers?
\end{Prob}

\begin{Prob}
\label{Pb5}
Is there a star-$K$-Menger space which is neither strongly star-Menger nor star-$K$-Scheepers?
\end{Prob}

\noindent{\bf Acknowledgement:} The authors are thankful to the Referee(s) for several valuable comments and suggestions that considerably improved the presentation of the paper.

\end{document}